\documentclass{amsart}
\usepackage[a4paper]{geometry}
\usepackage{csquotes}
\usepackage{afterpage}
\usepackage[T1]{fontenc}
\usepackage{kpfonts}
\usepackage{lmodern}
\usepackage{amscd,indentfirst,epsfig}
\usepackage{times}
\usepackage{enumerate}
\usepackage{mathrsfs}
\usepackage{stmaryrd}
\usepackage{mathalfa}
\usepackage{amsopn}
\usepackage{empheq,bm, bbm}
\usepackage{amsbsy,amsmath}
\usepackage{amscd}
\usepackage{comment}
\usepackage{appendix}
\usepackage{caption}
\usepackage{microtype}
\usepackage{mathtools, amsthm, amsfonts, amssymb, stmaryrd, dsfont}
\usepackage{caption}
\usepackage{subcaption}
\usepackage{url}
\usepackage{doi}
\usepackage{csquotes}
\usepackage{colortbl}
\usepackage{ esint }
\usepackage{arydshln}
\usepackage{tabularx}
\usepackage{chngcntr}
\usepackage{etoolbox}
\usepackage{array}
\usepackage{tikz}
\usepackage[most]{tcolorbox}
\usepackage{xcolor}
\usepackage{listings}
\usepackage[stable]{footmisc}
\usetikzlibrary{shadows}

\usepackage{cancel} 


\tcbset{colback=yellow!10!white, colframe=red!50!black, 
        highlight math style= {enhanced, 
            colframe=red,colback=red!10!white,boxsep=0pt}
        }

\newtcolorbox{mybox}[1]{colback=blue!5!white,colframe=blue!50,fonttitle=\bfseries,title=#1}

\setlength{\arrayrulewidth}{0.3mm}
\setlength{\tabcolsep}{15pt}

\definecolor{myblue}{rgb}{.9, .9, 1}

\usetikzlibrary{arrows}
\usetikzlibrary{shapes.geometric,decorations.markings,decorations.pathmorphing,decorations.pathreplacing,calligraphy}
\usetikzlibrary{patterns}
\usetikzlibrary{shadows}

\tikzset{
diagonal fill/.style 2 args={fill=#2, path picture={
\fill[#1, sharp corners] (path picture bounding box.south west) -|
                         (path picture bounding box.north east) -- cycle;}},
reversed diagonal fill/.style 2 args={fill=#2, path picture={
\fill[#1, sharp corners] (path picture bounding box.north west) |- 
                         (path picture bounding box.south east) -- cycle;}}
}

\newcommand{\R}{\mathbb{R}}
\newcommand{\N}{\mathbb{N}}

\newcommand{\ScEll}{{\mathcal{S}^+_\Ell}}
\newcommand{\ScEllone}{{\mathcal{S}^{+,1}_\Ell}}
\newcommand{\Ell}{\mathscr{L}}
\newcommand{\B}{\mathcal{B}}

\newcommand{\E}{\mathcal{E}}
\newcommand{\C}{\mathcal{C}}

\newcommand{\HH}{\mathcal{H}}

\newcommand{\dd}{\mathrm{d}}
\newcommand{\Hf}{\mathfrak{H}}
\newcommand{\perTr}{\underline{\mathrm{Tr}}}

\newcommand{\indic}{\mathbf{1}\hspace{-0.27em}\mathrm{l}}

\newcommand{\husimi}{\widetilde{W}_\Ell}

\newcommand{\coh}[2]{| #1, #2 \rangle}

\newcommand{\percoh}[2]{| \underline{#1, #2} \rangle}

\newcommand{\proj}[1]{| #1 \rangle \langle #1 |}

\newcommand{\e}{\varepsilon}

\renewcommand{\leq}{\leqslant}
\renewcommand{\geq}{\geqslant}

\newtheorem{proposition}{Proposition}
\newtheorem{theorem}{Theorem}
\newtheorem*{theorem*}{Theorem}
\newtheorem{definition}{Definition}

\newtheorem{lemma}{Lemma}

\newtheorem{remark}{Remark}
\newtheorem{corollary}{Corollary}

\setcounter{tocdepth}{1}

\title[Observability von Neumann crystal]{Observability inequality for the von Neumann equation in crystals}

\author[T. Borsoni]{T. Borsoni}
\address{T.B.: CERMICS, École des Ponts et Chaussées - Institut Polytechnique de Paris \& Inria}
\email{thomas.borsoni@enpc.fr}

\author[V. Ehrlacher]{V. Ehrlacher}
\address{V.E.: CERMICS, École des Ponts et Chaussées - Institut Polytechnique de Paris \& Inria}
\email{virginie.ehrlacher@enpc.fr}

\date{\today}

\keywords{observability, von Neumann equation, crystals, quantum optimal transport, semiclassical cost}

\begin{document}

\begin{abstract}
    We provide a quantitative observability inequality for the von Neumann equation on $\R^d$ in the crystal setting, uniform in small $\hbar$. Following the method of~\cite[F. Golse and T. Paul, 2022]{golse2022quantitative} proving this result in the non-crystal setting, the method relies on a stability argument between the quantum (von Neumann) and classical (Liouville) dynamics and uses an optimal transport-like pseudo-distance between quantum and classical densities. Our contribution yields in the adaptation of all the required tools to the periodic setting, relying on the Bloch decomposition, notions of periodic Schrödinger coherent state, periodic Töplitz operator and periodic Husimi densities.
\end{abstract}

\maketitle

\tableofcontents

\addtocontents{toc}{\protect\setcounter{tocdepth}{-1}} 

\section*{Introduction}

\addtocontents{toc}{\protect\setcounter{tocdepth}{1}} 
We obtain a quantitative observability equality for the von Neumann equation on $\R^d$ for crystals, uniform in the semi-classical limit of small $\hbar$. We use a method developed in~\cite{golse2022quantitative}, based on a classical/quantum stability argument with an optimal transport-like pseudo-metric between classical probability densities and quantum density matrices. Our contribution with respect to~\cite{golse2022quantitative} lies in the adaptation of the tools and arguments to the periodic (crystal) case. As such, we heavily rely on the Bloch decomposition, and consider periodic-adapted objects: the periodic trace (Definition~\ref{def:periodicTrace}), periodic Schrödinger coherent states, Töplitz and Husimi transforms (Definitions~\ref{def:periodizedCoherent},~\ref{def:periodizedToplitz} and~\ref{def:periodizedHusimi}) of which we provide various properties (Section~\ref{sec:propertiesSchroTopHus}), and a periodic variant of the optimal transport-like pseudo-metric (Definitions~\ref{def:class-quandcoupl},~\ref{def:class-quandcost} and~\ref{def:class-quandpseudometric}).
We mention the slight tweak of the cost with~\eqref{eqdef:Theta} which allows to make it work.

\medskip

This work stems from the study of the controllability of the Schrödinger equation, which has been a broad and active field~\cite{zuazua2002remarks,laurent2013survey}. We highlight the result of~\cite{lebeau1992controle,bardos1992sharp}, ensuring that the Geometric Control condition~\eqref{eq:GC} at a given time is sufficient for the exact controllability of the Schrödinger equation at any time, condition on which we rely here. There are also lines of work that do not rely on this condition (for instance on squared and perforated domains~\cite{jaffard1990controle},~\cite{burq1993controle} and the remark in~\cite[Equation~(2.1)]{zuazua2002remarks}, or based on uncertainty principle~\cite[Theorem 1.9]{d2025dynamical}). The controllability problem for a given system can often be addressed by turning it into an observability problem over an adjoint system through the Hilbert Uniqueness Method (HUM)~\cite{lions1988exact}. As pointed out in~\cite{zuazua2002remarks}, the adjoint system for the Schrödinger equation is the equation itself; as such, for this problem, controllability and observability are equivalent. Consider some open $\omega \subset \R^d$ and $\Psi^{\rm in} \subset L^2(\R^d)$, we say that an observability inequality holds for the Schrödinger equation over $\omega$, $\Psi^{\rm in}$ and some final time $T>0$ if there is a $C \equiv C(\omega,\Psi^{\rm in},T)>0$ such that for any $\psi$ solution to the Schrödinger equation with initial datum in $\Psi^{\rm in}$ we have
\begin{equation} \label{eqintro:observabilityschro}
\int_0^T \int_\omega |\psi(t,x)|^2 \, \dd t\, \dd x \geq C \|\psi(0)\|^2_{L^2}.
\end{equation}
Exact controllability (with respect to $\omega$ and $T$) then holds as soon as the above is true with $\Psi^{\rm in} = L^2(\R^d)$ (see~\cite{zuazua2002remarks}). 

\medskip

In this work, we rather focus on the von Neumann equation, a generalization of the Schrödinger equation to mixed states -- trace-class nonnegative self-adjoint operators over $L^2(\R^d)$. In this setting,~\eqref{eqintro:observabilityschro} writes
\begin{equation} \label{eqintro:observabilityvonNeu}
\int_0^T \text{trace} \left( \mathds{1}_\omega \, R(t) \, \mathds{1}_\omega  \right) \, \text{d} t \geq C \, \text{trace} \left(R(0)\right),
\end{equation}
where for all $t\in [0,T]$, $R(t)$ denotes a bounded self-adjoint non-negative trace-class operator acting on $L^2(\mathbb{R}^d)$, which is a suitable solution to the von Neumann equation. Our system of interest in the present work being an infinite periodic crystal in $\R^d$, it is relevant to turn to the von Neumann equation instead of the Schrödinger equation: the notion of wavefunction is no longer justified there (as the number of electrons in the system is infinite) but the notion of density matrix still is.

\medskip

In the next Section~\ref{sec:mainresult}, we introduce the mathematical framework of our studied crystal, define the suitable notions of periodic density matrices, trace, Schrödinger coherent state, Töplitz and Husimi transforms, and state our main result (Theorem~\ref{theo:obsfinal}), an observability inequality for the von Neumann equation in the crystal setting. Then, following the method of~\cite{golse2022quantitative}, we establish in Section~\ref{sec:stability} a stability property between the (classical) Liouville and (quantum) von Neumann dynamics, relying on an (periodically adapted) optimal transport-like pseudo-metric. Section~\ref{sec:propertiesSchroTopHus} is devoted to the obtention of useful properties of the periodic Schrödinger coherent states, Töplitz and Husimi transforms. Combining the results of Sections~\ref{sec:stability} and~\ref{sec:propertiesSchroTopHus}, we conclude in Section~\ref{sec:proof} to Theorem~\ref{theo:obsfinal}. The appendix contains tools on the Bloch transform and various technical lemmas.

\section{Key concepts and main result}
\label{sec:mainresult}
\subsection{Crystal setting and periodic trace} Let us consider a full-rank Bravais lattice $\Ell$ of $\R^d$, in the sense that there exists a linearly independent family $\bm a_1, \dots, \bm a_d \in \R^d$ such that
\[
\Ell = \left\{\sum_{j=1}^d \bm n_j \, \bm a_j , \; n \in \mathbb{Z}^d  \right\}.
\]
We consider a unit cell $\Gamma \subset \R^d$, such that $\{\Gamma + \ell\}_{\ell \in \Ell}$ forms a partition of $\R^d$, which interior is the one of the Wigner-Seitz cell associated to the lattice, defined by
\[
\{x \in \R^d \, | \; \forall \ell \in \Ell, \; |x| \leq  |x-\ell|\}.
\] 
Notice however the slight difference in the boundaries of $\Gamma$ and the Wigner–Seitz cell. For instance, in the case $\Ell = \mathbb{Z}^d$, the latter is $[-\frac12, \frac12]^d$ while one could take $\Gamma = [-\frac12, \frac12)^d$. As $\{\Gamma + \ell\}_{\ell \in \Ell}$ is a partition of $\R^d$, we can define, for any $z \in \R^d$,
\begin{equation} \label{eqdef:PGammaz}
    P_\Gamma z
\end{equation}
as the unique element of $\Gamma$ such that $z - P_\Gamma z \in \Ell$.
 In particular, one has
\begin{align} 
    \forall z \in \R^d \setminus (\partial \Gamma + \Ell), \qquad  \; \; \; \; \; P_\Gamma z \; &= z - \underset{\ell \in \Ell}{\mathrm{argmin}} |z-\ell|, \label{eqdef:Pgammainterior} \\
    \forall z \in \R^d \setminus (\partial \Gamma + \Ell), \qquad P_\Gamma (-z) &= -P_\Gamma z. \phantom{\forall z \in \R^d \setminus (\partial \Gamma + \Ell). \qquad}\label{eq:PgammaMinusz}
\end{align}
We denote
\begin{equation} \label{eqdef:gammPlusgammaMinus}
    \gamma_- = \inf_{z \in \partial \Gamma} |z|, \qquad \gamma_+ = \sup_{z \in \partial \Gamma} |z|.
\end{equation}
\begin{remark}
    In the case $\Ell = \mathbb{Z}^d$ and $\Gamma = [-\frac12,\frac12)^d$, we have $\gamma_- = \frac12$ and $\gamma_+ = \frac{\sqrt{d}}{2}$.
\end{remark}

\noindent We similarly choose a unit cell $\Gamma^*$ for the reciprocal lattice, the latter being defined as
\[
\Ell^* := \left\{\sum_{j=1}^d \bm n_j \, \bm b_j, \;\bm n:=(\bm n_j)_{1\leq j \leq d} \in \mathbb{Z}^d  \right\},
\]
where the vectors $\bm b_j\in \mathbb{R}^d$ for $1\leq j \leq d$ are such that
\[
\bm b_{j_1} \cdot \bm a_{j_2} = 2 \pi \, \indic_{j_1 = j_2}  \quad \forall 1\leq j_1, j_2 \leq d.
\]
In the following, we denote by $\mathcal{S}_\Ell$ the set of self-adjoint operators on $L^2(\R^d, \mathbb{C})$ that commute for all $\ell \in \Ell$ with the translation operator $\tau_\ell$ which is defined as follows: for any $\psi \in L^2(\R^d)$,
\[
\phantom{. y \in \R^d} \qquad \tau_\ell \psi(y) := \psi(y-\ell), \qquad y \in \R^d.
\]
Operators belonging to $\mathcal{S}_\Ell$ admit a ``Bloch decomposition'', detailed in Appendix~\ref{sec:Bloch}. We denote by $\ScEll$ the subset of $\mathcal{S}_\Ell$ of nonnegative operators. We may then define a notion of trace suited for ``periodic'' (in the sense that they commute with translations) operators of $\ScEll$ as follows.
\begin{definition}
  [\textbf{Periodic trace}] \label{def:periodicTrace} Let $R \in \ScEll$ and $\{R_k\}_{k \in \Gamma^*}$ be the family of operators on $L^2_{\rm per}(\Gamma)$ given by the Bloch transform of $R$ (see Appendix~\ref{sec:Bloch}). We define the \emph{periodic trace} of $R$ by
    \begin{equation} \label{eqdef:periodictrace}
        \perTr_{L^2(\R^d)}(R) := \fint_{\Gamma^*} \mathrm{Tr}_{L^2_{\rm per}(\Gamma)}(R_k) \, \dd k,
    \end{equation}
and let
\begin{equation} \label{eqdef:SL1}
\ScEllone := \{R \in \ScEll, \; \; \perTr_{L^2(\R^d)}(R) = 1\}.
\end{equation}
In order to lighten the notations in the following, $\perTr_{L^2(\R^d)}$ is denoted by $\perTr$ and $\mathrm{Tr}_{L^2_{\rm per}(\Gamma)}$ by $\mathrm{Tr}$, but one should keep in mind that these notions of trace are associated with operators acting on different spaces.
\end{definition}

\noindent For operators in $\ScEll$, the periodic trace is a more meaningful object to consider than the standard trace. Indeed, the latter may be viewed as counting the total number of electrons in $\R^d$, which is infinite in our setting, whereas the periodic trace measures the number of electrons per unit cell (see for instance the definition in~\cite{frank2020periodic}), which in our case is one.

\medskip

\noindent The classical counterpart to $\ScEll$ corresponds to the set of $\Ell$-periodized-in-$x$ probability densities on $\Gamma \times \R^d$, defined by
\begin{equation} \label{eqdef:PEll}
    \mathcal{P}_\Ell := \left\{f \in L^1_{\rm loc}(\R^d \times \R^d) \; \text{ s.t. } f \text{ is }\Ell\text{-periodic in }x \text{ and } \iint_{\Gamma \times \R^d} f(x,\xi) \, \dd x \, \dd \xi = 1 \right\}.
\end{equation} 

\subsection{Quantum-classical relationships: Töplitz operator and Husimi density}
As the core of this paper relies on comparing the quantum and classical dynamics, we need to be able to associate to a classical density in $\mathcal{P}_\Ell$ a quantum density operator in $\ScEllone$, which is the role of the Töplitz transformation, and conversely to a  quantum density operator in $\ScEllone$ a classical
 density in $\mathcal{P}_\Ell$, that is the role of the Husimi transform. Both rely on the notion of Schrödinger coherent state, and we present hereafter all these concepts in the $\Ell$-periodic setting. Periodized coherent states and Husimi density were already introduced in~\cite[Equations (15)--(16)]{batistic2019statistical}.

\begin{definition}
[\textbf{Schrödinger coherent state and $\Ell$-periodic Schrödinger coherent state}]  \label{def:periodizedCoherent} For any $(q,p) \in \R^d \times \R^d$, the Schrödinger coherent state $\coh{q}{p} \in L^2(\R^d)$ is
\begin{equation} \label{eqdef:cohState}
    \forall y \in \R^d, \quad  \coh{q}{p}(y)  := (\pi \hbar)^{-d / 4} \, e^{-\frac{|y-q|^2}{2 \hbar}} \, e^{i p \cdot y / \hbar}.      \quad \phantom{\forall y \in \R^d,}
\end{equation}
We define the $\Ell$-periodic Schrödinger coherent state $\percoh{q}{p} \in L^2_{\rm per}(\Gamma)$ by
\begin{equation} \label{eqdef:percohState}
    \forall y \in \R^d, \quad  \percoh{q}{p}(y)  := \sum_{\ell \in \Ell} \coh{q}{p}(y + \ell).      \quad \phantom{\forall y \in \R^d,}
\end{equation}
\end{definition}

\medskip

\noindent We show in Lemma~\ref{lem:schroProp} that $(q,p,y) \mapsto \percoh{q}{p}(y)$ is $\Ell$-periodic both in $q$ and $y$, $\|\percoh{q}{p}\|_{L^2_{\rm per}(\Gamma)} = 1$ and, for any $k \in \Gamma^*$,
\[
\percoh{q}{p - \hbar k} = \coh{q}{p}_k,
\]
where $(\coh{q}{p}_k)_{k\in \Gamma^*}$ is the Bloch decomposition of $\coh{q}{p} \in L^2(\R^d)$.
\begin{definition}  [\textbf{$\Ell$-periodic Töplitz operator}] \label{def:periodizedToplitz}
For each $f \in \mathcal{P}_\Ell$, we define the $\Ell$-Töplitz operator $T_\Ell [f]$ by, for any $\varphi \in L^2(\R^d)$,
\begin{equation} \label{eqdef:LperToplitz}
    T_\Ell [f] \varphi(y) := \iint_{\Gamma \times \R^d} \left(\fint_{\Gamma^*} e^{i k  \cdot y} \langle \underline{q,p - \hbar k} | \varphi_k \rangle_{L^2_{\rm per}(\Gamma)} \,  |\underline{q,p- \hbar k} \rangle(y)  \, \dd k \right) f(q,p)\, \dd q \, \dd p.
\end{equation}
\end{definition}

\medskip

\noindent It can be shown that $T_\Ell[f]$ belongs to $\ScEllone$ and that its Bloch decomposition writes, for any $k \in \Gamma^*$,
\[
T_\Ell [f]_k = \iint_{\Gamma \times \R^d}\proj {\underline{q,p - \hbar k}}\, f(q,p)\, \dd q \, \dd p.
\]

\begin{definition} [\textbf{$\Ell$-periodic Husimi density}] \label{def:periodizedHusimi}
Let $R \in \ScEllone$, and denote by $\{R_k\}_{k \in \Gamma^*}$ its Bloch decomposition. Define for any $(q,p) \in \R^d \times \R^d$
\begin{equation} \label{eqdef:husimi1}
\husimi[R](q,p) := \fint_{\Gamma^*} f_k(q,p) \, \dd k
\end{equation}
with, for any $k \in \Gamma^*$,
\begin{equation} \label{eqdef:husimi2}
 f_k(q,p) := (2 \pi \hbar)^{-d} \, 
 \langle\underline{q,p-\hbar k} | \, R_k \,  | \underline{q,p-\hbar k}\rangle_{L^2_{\rm per}(\Gamma)}.
\end{equation}
\end{definition}

\medskip

\noindent We show in Proposition~\ref{prop:HusimiIsAProba} that indeed, $\husimi[R] \in \mathcal{P}_\Ell$.

\subsection{Quantum and classical related dynamics} 
We consider a Lipschitz $\Ell$-periodic potential $V \in \mathcal{C}^{1}_{\rm Lip}(\R^d) \cap H^2_{\rm per}(\Gamma)$, from which we build a Hamiltonian operator on $L^2(\R^d)$,
\begin{equation} \label{eq:quantumHamiltonian}
\HH :=  -\frac{\hbar^2}{2} \Delta_y + V(y),
\end{equation}
which is self-adjoint and bounded from below with domain $H^2(\R^d)$, dense in $L^2(\R^d)$. Since $V$ is assumed to be $\Ell$-periodic and $\Delta_y$ commutes with any translation, we have $\HH \in \mathcal{S}_\Ell$. The corresponding von Neumann equation, for some ``periodic'' initial datum $R^{\rm in} \in \ScEllone$, writes
\begin{equation} \label{eq:vonNeumann}
    i \hbar \frac{\dd  R(t)}{\dd t} = \left[ \HH, \; R(t) \right], \quad \left.R\right|_{t=0} = R^{\rm in}, \quad R(t) \in \ScEllone.
\end{equation}
To the quantum Hamiltonian $\mathcal{H}$ can be associated its classical counterpart, the Hamiltonian function $H : \R^d \times \R^d \to \R$, defined for any $(x, \xi) \in \R^d \times \R^d$ by
\begin{equation} \label{eq:classicalHamiltonian}
    H(x,\xi) :=  \frac12|\xi|^2 + V(x).
\end{equation}
The classical dynamics associated to the Hamiltonian $H$ is the Liouville equation (counterpart to the von Neumann equation), which writes for some initial datum $f^{\rm in} \in \mathcal{P}_\Ell$,
\begin{equation} \label{eq:Liouville}
    \partial_t f(t,x,\xi) + \left\{H, \, f(t,\cdot,\cdot) \right\}(x,\xi) = 0, \qquad \left. f\right|_{t=0} = f^{\rm in},
\end{equation}
and $\{\bullet,\bullet\}$ stands for the Poisson bracket, with the convention
\begin{equation} \label{eqdef:Poissonbracket}
\left\{h, h_* \right\} = -\sum_{m = 1}^d \left( \frac{\partial h}{\partial x_m} \frac{\partial h_*}{\partial \xi_m} -   \frac{\partial h_*}{\partial x_m} \frac{\partial h}{\partial \xi_m}\right).
\end{equation}
The characteristics associated with the Liouville equation~\eqref{eq:Liouville} starting from $(x,\xi) \in \R^d \times \R^d$, denoted by $t\mapsto  (X(t;x,\xi),\Xi(t;x,\xi))$, satisfy the ODE
\begin{equation} \label{eq:trajectoryInit}
    \begin{cases}
    \dot{X} &= \phantom{-}\partial_{\xi}H(X,\Xi) = \Xi, \\
    \dot{\Xi} &= - \partial_{x}H(X,\Xi) = -\nabla V (X), \qquad \qquad (X(0), \Xi(0)) = (x,\xi).
    \end{cases}
\end{equation}

\subsection{Main result} Let us now state our main result. In Theorem~\ref{theo:obsfinal}, we obtain an observation inequality for the von Neumann equation in the crystal case, by combining the stability result, Theorem~\ref{theo:stability}, with the following à la Bardos-Lebeau-Rauch~\cite{bardos1992sharp} geometric condition~\eqref{eq:GC}, associated with the potential $V$, on a given triplet $(T,K,\Omega)$ on the classical dynamics, with $T>0$, $K \subset \Gamma \times \R^d$ compact and $\Omega \subset \Gamma$ open:
\begin{equation}
    \tag{GC} \label{eq:GC}
    \forall \, (x,\xi)\in K, \quad \exists \, t \in (0,T) \; \text{ s.t. } \; X(t;x,\xi) \in \Omega,
\end{equation}
where $X$ is the position component of the classical characteristics associated with the potential $V$, see~\eqref{eq:trajectoryInit}.

\medskip

\noindent Let us recall in the following lemma a crucial observability property for the Liouville equation.

\begin{lemma} [\textbf{Observability for Liouville}] \label{lem:liouvilleobs} Consider a triplet $(T,K,\Omega)$ satisfying the geometric condition~\eqref{eq:GC} for the potential $V$. There exists a constant $C_{\emph{\text{GC}}}[T,K,\Omega] > 0$ such that
\begin{equation} \label{eq:obsLiouville}
    \inf_{(x,\xi) \in K} \int_0^T \indic_{\Omega} ( X(t;x,\xi)) \, \text{\emph{d}} t  \geq C_{\emph{\text{GC}}}[T,K,\Omega].
\end{equation}
\end{lemma}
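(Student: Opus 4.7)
The plan is to combine the pointwise condition~\eqref{eq:GC} with the continuity of the Hamiltonian flow and the compactness of $K$ to produce a uniform lower bound.

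First, since $V \in \mathcal{C}^1_{\rm Lip}(\R^d)$, the vector field $(\xi, -\nabla V(x))$ appearing on the right-hand side of~\eqref{eq:trajectoryInit} is Lipschitz on $\R^d \times \R^d$. By the Cauchy--Lipschitz theorem, the flow $(t,x,\xi) \mapsto (X(t;x,\xi), \Xi(t;x,\xi))$ is globally defined and jointly continuous in all its arguments (in fact Lipschitz in $(x,\xi)$ uniformly on compact time intervals).

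Next I argue pointwise. Fix $(x,\xi) \in K$. By~\eqref{eq:GC} there exists $t_{x,\xi} \in (0,T)$ such that $X(t_{x,\xi};x,\xi) \in \Omega$. Since $\Omega$ is open, I can pick $r_{x,\xi} > 0$ with $\overline{B}(X(t_{x,\xi};x,\xi), r_{x,\xi}) \subset \Omega$. Invoking the joint continuity of the flow, I then obtain $\e_{x,\xi}>0$ and $\eta_{x,\xi} > 0$ with $[t_{x,\xi} - \eta_{x,\xi}, t_{x,\xi} + \eta_{x,\xi}] \subset (0,T)$ such that $X(s;x',\xi') \in \Omega$ for every $(x',\xi') \in B((x,\xi), \e_{x,\xi})$ and every $s$ in that interval; hence $\int_0^T \indic_\Omega(X(s;x',\xi')) \, \dd s \geq 2\eta_{x,\xi}$ for all such $(x',\xi')$.

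Finally, since $K$ is compact, the open cover $\{B((x,\xi), \e_{x,\xi})\}_{(x,\xi) \in K}$ admits a finite subcover $\{B((x_i,\xi_i), \e_i)\}_{1 \leq i \leq N}$, and setting $C_{\text{GC}}[T,K,\Omega] := \min_{1 \leq i \leq N} 2\eta_i > 0$ yields the claim. The argument is purely qualitative — it produces the existence of a positive constant but no explicit expression of $C_{\text{GC}}$ in terms of $T$, $V$, $K$ and $\Omega$ — and I do not anticipate any serious obstacle, the only real technical input being the $\mathcal{C}^1_{\rm Lip}$ regularity of $V$ which supplies the continuous dependence of the characteristics on initial data.
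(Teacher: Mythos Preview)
Your argument is correct and is the standard compactness-plus-continuity argument one expects here. The paper does not supply its own proof but simply refers to the beginning of the proof of \cite[Corollary~4.2]{golse2022quantitative}, where precisely this reasoning is carried out.
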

\noindent The proof of the above lemma can be found at the beginning of the proof of~\cite[Corollary 4.2]{golse2022quantitative}.

\begin{theorem} [\textbf{Observability for von Neumann}] \label{theo:obsfinal}
   Let $T>0$, $K \subset \Gamma \times \R^d$ a compact Borel set and $\Omega \subset \Gamma$ an open set satisfying the geometric condition~\eqref{eq:GC} associated with the potential $V \in \mathcal{C}^{1,1}(\R^d) \cap H^2_{\rm per}(\Gamma)$. Let $R^{\rm in} \in \ScEllone$ and $t \mapsto R(t)$ solving~\eqref{eq:vonNeumann} for the potential $V$. Then, for any $\delta > 0$, letting
\[
\Omega_\delta  = \Omega + B(0_{\R^d},\delta), \qquad \Omega_\delta^\Ell = \Omega_\delta + \Ell,
\]   
the solution to the von Neumann equation~\eqref{eq:vonNeumann} satisfies the following observability property on $[0,T] \times \Omega^\Ell_\delta$.

\medskip

\noindent \emph{Töplitz case.} If $R^{\rm in} = T_\Ell [f^{\rm in}]$ with $f^{\rm in} \in \mathcal{P}_\Ell$, the $\Ell$-periodic Töplitz operator defined in Definition~\ref{def:periodizedToplitz},
\begin{multline}
\label{eq:theoremToplitz_Obs}
    \int_0^T \underline{\emph{\text{Tr}}} \left(\indic_{\Omega_\delta^\Ell}  \,  R(t) \, \indic_{\Omega_\delta^\Ell}\right) \, \emph{\text{d}} t \geq C_{\emph{\text{GC}}}[T,K,\Omega] \iint_{(x,\xi) \in K}   f^{\rm in}(x,\xi) \, \dd x \, \dd \xi \\
    - C_{\text{\emph{Töplitz}}}[\Gamma,T,\mathrm{Lip}(\nabla V)] \; \frac{\sqrt{d\hbar}}{\delta},
\end{multline}
with, recalling that $\gamma_-$ and $\gamma_+$ are defined in~\eqref{eqdef:gammPlusgammaMinus},
\begin{equation} \label{eqdef:CToplitz}
C_{\text{\emph{Töplitz}}}[\Gamma,T,L] := \sqrt{\frac{\gamma_-}{2\gamma_+}} \; \inf_{\lambda > 0} \frac{\exp \left(\frac{2\gamma_+}{\gamma_-} \left(\lambda + \frac{L^2}{\lambda} \right) T \right)-1}{\lambda^2 + L^2} \, \sqrt{\frac{1 + \lambda^2}{2}}.
\end{equation}

\noindent \emph{Pure state case.} If the Bloch decomposition $\{R^{\rm in}_k\}_{k \in \Gamma^*}$ of $R^{\rm in}$ satisfies $R^{\rm in}_k = |u^{\rm in}_k \rangle \langle u^{\rm in}_k|$ for any $k \in \Gamma^*$ and
\begin{equation} \label{eqdef:cBold}
\mathbf{c}_{u^{\rm in}} := \fint_{\Gamma^*}\|u^{\rm in}_k\|^4_{L^2_{\rm per}(\Gamma)} \, \dd k < +\infty,
\end{equation}
then
\begin{multline}
\label{eq:theoremPure_Obs}
    \int_0^T \underline{\emph{\text{Tr}}} \left(\indic_{\Omega_\delta^\Ell} \,  R(t) \, \indic_{\Omega_\delta^\Ell}\right) \, \emph{\text{d}} t \geq C_{\emph{\text{GC}}}[T,K,\Omega] \iint_{(x,\xi) \in K}   \husimi[R^{\rm in}](x,\xi) \, \dd x \, \dd \xi \\
    - C_{\text{\emph{pure}}}[\Gamma,T,\mathrm{Lip}(\nabla V)] \; \frac{\sqrt{d\hbar \,\mathbf{c}_{u^{\rm in}} + 2{\Delta_{\Gamma, \hbar}}^2 (R^{\rm in})}}{\delta},
\end{multline}
where $\husimi[R^{\rm in}]$ is the $\Ell$-periodic Husimi transform of $R$ defined in Definition~\ref{def:periodizedHusimi},  $\Delta_{\Gamma, \hbar} (R^{\rm in})$ is the standard deviation of $R^{\rm in}$ defined by 
\begin{equation} \label{eqdef:stdDevR}
    \Delta_{\Gamma, \hbar} (R^{\rm in}) := \left(\frac12 \fint_{\Gamma^*} \iint_{\Gamma \times \Gamma} \left\{ |P_\Gamma(y-q)|^2 + \left(i\hbar \nabla_y - i \hbar \nabla_q \right)^2\right\} \, |u_k^{\rm in}(y)|^2 \, |u_k^{\rm in}(q)|^2 \,  \dd y \, \dd q \,\dd k \right)^{\frac12},
\end{equation}
and $C_\text{\emph{pure}}$ writes 
\begin{equation} \label{eqdef:Cpure}
    C_\text{\emph{pure}}[\Gamma, T,L] :=\sqrt{\frac{\gamma_-}{2\gamma_+}} \; \frac{\exp  \left(\frac{2\gamma_+}{\gamma_-} (1 + L^2) \, T \right) - 1}{1 + L^2}.
\end{equation}
\end{theorem}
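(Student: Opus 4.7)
The plan is to follow the Golse--Paul strategy, combining the classical observability from Lemma~\ref{lem:liouvilleobs} with the quantum--classical stability Theorem~\ref{theo:stability} through the periodic Kantorovich-like pseudo-metric of Definition~\ref{def:class-quandpseudometric}. The periodic Töplitz and Husimi transforms bridge classical and quantum densities at the initial time, and a $\tfrac{1}{\delta}$-Lipschitz mollification of $\indic_\Omega$ absorbs the spatial mismatch generated by the coupling.

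\textbf{Classical side.} I pick a reference classical density $f^{\rm in}\in \mathcal{P}_\Ell$: the given $f^{\rm in}$ in the Töplitz case, and $f^{\rm in}:=\husimi[R^{\rm in}]$ (which lies in $\mathcal{P}_\Ell$ by Proposition~\ref{prop:HusimiIsAProba}) in the pure state case. Let $t \mapsto f(t,\cdot,\cdot)$ denote the solution of~\eqref{eq:Liouville} with initial datum $f^{\rm in}$. Integrating~\eqref{eq:obsLiouville} against $f^{\rm in}(x,\xi)\indic_K(x,\xi)\,\dd x\,\dd\xi$ and changing variables along the measure-preserving Hamiltonian flow gives
\begin{equation*}
\int_0^T \iint_{\Omega \times \R^d} f(t,x,\xi)\, \dd x \,\dd \xi\, \dd t \; \geq \; C_{\text{GC}}[T,K,\Omega] \iint_K f^{\rm in}(x,\xi)\,\dd x\,\dd \xi.
\end{equation*}

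\textbf{From classical to quantum observation.} I introduce a nonnegative, $\Ell$-periodic, $\tfrac{1}{\delta}$-Lipschitz cutoff $\phi_\delta$ on $\R^d$ with $\phi_\delta \equiv 1$ on $\Omega^\Ell:=\Omega+\Ell$ and $\mathrm{supp}\,\phi_\delta \subset \Omega_\delta^\Ell$, so that
\begin{equation*}
\iint_\Omega f(t,\cdot) \, \leq \, \iint \phi_\delta\, f(t,\cdot),\qquad \perTr\bigl(\phi_\delta^{1/2} R(t)\, \phi_\delta^{1/2}\bigr)\, \leq\, \perTr\bigl(\indic_{\Omega_\delta^\Ell} R(t) \indic_{\Omega_\delta^\Ell}\bigr).
\end{equation*}
For any periodic coupling $\rho(t)$ of $f(t)$ and $R(t)$ (Definition~\ref{def:class-quandcoupl}), the quantity $\iint \phi_\delta\, f(t,\cdot) - \perTr(\phi_\delta^{1/2} R(t)\,\phi_\delta^{1/2})$ can be rewritten as an expression in $\rho(t)$ of the form $\phi_\delta(x)-\phi_\delta(y)$ (in a suitable $\Ell$-periodic sense). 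Using the Lipschitz estimate $|\phi_\delta(x)-\phi_\delta(y)|\leq \tfrac{1}{\delta}|P_\Gamma(x-y)|$ and Cauchy--Schwarz against the cost~\eqref{eqdef:Theta}, this difference is bounded by $\tfrac{1}{\delta}$ times the square root of the periodic cost of $\rho(t)$; optimizing over couplings recovers the pseudo-distance $E_\hbar(f(t),R(t))$ of Definition~\ref{def:class-quandpseudometric}.

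\textbf{Time integration, stability and initial bounds.} Theorem~\ref{theo:stability} controls the evolution of the pseudo-distance by an exponential-in-time factor, with a free parameter $\lambda>0$ arising from the quadratic splitting in the Lipschitz control of $\nabla V$, giving a bound of the shape $E_\hbar^2(f(t),R(t)) \leq e^{\Lambda(\lambda,V)\,t}\bigl(E_\hbar^2(f^{\rm in},R^{\rm in}) + \tfrac{d\hbar}{2}\bigr)$. The initial pseudo-distance is then estimated from Section~\ref{sec:propertiesSchroTopHus}: in the Töplitz case $E_\hbar^2(f^{\rm in},T_\Ell[f^{\rm in}])\leq \tfrac{d\hbar}{2}$ from the coherent-state variance computation, while in the pure state case $E_\hbar^2(R^{\rm in},\husimi[R^{\rm in}])\leq \tfrac{d\hbar\,\mathbf{c}_{u^{\rm in}}}{2}+\Delta_{\Gamma,\hbar}^2(R^{\rm in})$, the extra term coming exactly from the spread~\eqref{eqdef:stdDevR}. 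Combining the pointwise bound of the second step with the stability estimate, integrating in $t\in[0,T]$ and applying Cauchy--Schwarz in time produces an error of the form $\tfrac{1}{\delta}\sqrt{T}\,\bigl(\text{growth factor}\bigr)\cdot\sqrt{E_\hbar^2(f^{\rm in},R^{\rm in})+d\hbar}$. Optimizing over $\lambda$ in the Töplitz case produces the explicit constant $C_{\text{Töplitz}}$ of~\eqref{eqdef:CToplitz}; in the pure state case, the initial variance term dominates and one simply takes $\lambda=1$ to obtain~\eqref{eqdef:Cpure}.

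\textbf{Main obstacle.} The most delicate point is the periodic quantum--classical Lipschitz duality in the second step: I must verify that the $\Ell$-periodic cutoff $\phi_\delta$ pairs admissibly against a coupling $\rho(t)$ defined through the $P_\Gamma$-projected cost~\eqref{eqdef:Theta} and that $|\phi_\delta(x)-\phi_\delta(y)|$ really is controlled by $\tfrac{1}{\delta}|P_\Gamma(x-y)|$ uniformly across $\Ell$-translates -- this is where the slight tweak introduced via~\eqref{eqdef:Theta} and mentioned in the introduction becomes essential. Once this periodic analogue of Kantorovich--Rubinstein duality is established, combining the classical observability, the Lipschitz transfer, the stability estimate and the initial bounds from Section~\ref{sec:propertiesSchroTopHus} yields~\eqref{eq:theoremToplitz_Obs} and~\eqref{eq:theoremPure_Obs} with the stated constants.
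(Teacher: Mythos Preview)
Your outline is the same architecture as the paper's proof: classical observability from Lemma~\ref{lem:liouvilleobs}, comparison of the quantum and classical observables through an $\Ell$-periodic $\tfrac{1}{\delta}$-Lipschitz cutoff (the paper takes $\chi(x)=(1-\mathrm{dist}(x,\Omega^\Ell)/\delta)_+$), control of the discrepancy by $E_{\hbar,\lambda}$, propagation via Theorem~\ref{theo:stability}, and initial bounds from Propositions~\ref{prop:boundToplitz} and~\ref{prop:EhbarHusimi}. The periodic Lipschitz duality you flag as the delicate point is handled exactly as you describe: $|\chi(x)-\chi(y)|\leq \tfrac{1}{\delta}|P_\Gamma(x-y)|$ by $\Ell$-periodicity of $\chi$, followed by $|P_\Gamma(x-y)|^2 \leq \tfrac{2\gamma_+}{\gamma_-}\Theta(|P_\Gamma(x-y)|^2)$ and the addition of the nonnegative momentum term $(\xi-\hbar k+i\hbar\nabla_y)^2$ to recover $\hat c_\lambda$.

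Two details in your write-up would, however, not reproduce the stated constants~\eqref{eqdef:CToplitz}--\eqref{eqdef:Cpure}. First, Theorem~\ref{theo:stability} is purely multiplicative, $E_{\hbar,\lambda}(f(t),R(t))\leq e^{\eta_{\lambda,V}t}\,E_{\hbar,\lambda}(f^{\rm in},R^{\rm in})$, with no additive $\tfrac{d\hbar}{2}$; the $d\hbar$ enters only through the initial bound of Proposition~\ref{prop:boundToplitz}, which reads $E_{\hbar,\lambda}(f^{\rm in},T_\Ell[f^{\rm in}])\leq \sqrt{(1+\lambda^2)\,d\hbar/2}$ and is genuinely $\lambda$-dependent --- this $\lambda$-dependence is precisely what produces the $\sqrt{(1+\lambda^2)/2}$ in~\eqref{eqdef:CToplitz}. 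Second, there is no Cauchy--Schwarz in time: once the pointwise bound $\bigl|\perTr(\chi R(t))-\iint\chi f(t)\bigr|\leq \sqrt{2\gamma_+/\gamma_-}\,\tfrac{1}{\lambda\delta}\,E_{\hbar,\lambda}(f(t),R(t))$ is in hand, you insert the exponential stability and compute $\int_0^T e^{\eta_{\lambda,V}t}\,\dd t=(e^{\eta_{\lambda,V}T}-1)/\eta_{\lambda,V}$ directly. This is the source of the factors $(e^{\eta T}-1)/(\lambda^2+L^2)$ and $(e^{\eta T}-1)/(1+L^2)$ in~\eqref{eqdef:CToplitz}--\eqref{eqdef:Cpure}; your $\sqrt{T}$-route would give a different, looser constant.
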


\medskip

\noindent We can derive an observability inequality from Theorem~\ref{theo:obsfinal}. The right-hand side of Equation~\eqref{eq:theoremToplitz_Obs} can be lower-bounded by a positive constant for any Töplitz initial datum which symbol has support in $K$, as soon as
\begin{equation}
\label{eq:constantToplitz_Obs}
\hbar < \frac{\delta^2}{d}  \times  \frac{C_{\text{GC}}[T,K,\Omega]^2}{C_{\text{Töplitz}}[\Gamma,T,\mathrm{Lip}(\nabla V)]^2}.
\end{equation}
On the other hand, there is a non-empty set of ``periodic pure state'' initial data $R^{\rm in}$, uniform in $\hbar \in (0, \hbar_0)$ for some $\hbar_0 >0$, such that~\eqref{eq:theoremPure_Obs} is lower-bounded by a positive constant: indeed, if $R^{\rm in}$ is constructed from the periodic Schrödinger coherent states $\{\percoh{q}{p - \hbar k}\}_{k \in \Gamma^*}$ defined in~\eqref{eqdef:percohState} with $(q,p) \in \overset{\circ}{K}$, the term $d\hbar \,\mathbf{c}_{u^{\rm in}} + 2{\Delta_{\Gamma, \hbar}}^2 (R^{\rm in})$ is of the order of $\hbar$, while $\iint_{K}\husimi[R^{\rm in}](x,\xi) \, \dd x \, \dd \xi$ may be lower bounded by a positive constant depending only on $\hbar_0$.
\begin{remark}
    The observation inequalities~\eqref{eq:theoremToplitz_Obs} and~\eqref{eq:theoremPure_Obs} involve the set $\Omega^\Ell_\delta = \Omega_\delta + \Ell$, however this does not represent a simultaneous observation of the system in all copies of  $\Omega_\delta$, but rather a single observation of the system in $\Omega_\delta$ while knowing its periodicity. To illustrate this fact, consider a ``periodic pure state'' $R$, in the sense that its Bloch decomposition can be written
    \[
    R_k = |v_k \rangle \langle v_k|, \qquad k \in \Gamma^*,
    \]
    with $v_k \in L^2_{\rm per}(\Gamma)$. The quantity representing the observation of the system writes
\begin{multline*}
\underline{\emph{\text{Tr}}}_{L^2(\R^d)} \left(\indic_{\Omega_\delta^\Ell} \,  R \, \indic_{\Omega_\delta^\Ell}\right) = \fint_{\Gamma^*} \emph{\text{Tr}}_{L^2_{\emph{\text{per}}}(\Gamma)} \left(\indic_{\Omega_\delta^\Ell} \,  R_k \, \indic_{\Omega_\delta^\Ell}\right) \, \emph{\text{d}} k = \fint_{\Gamma^*} \int_{\Gamma} \indic_{\Omega_\delta}(x) \,|v_k(x)|^2 \, \emph{\text{d}} x \, \emph{\text{d}} k \\
= \int_{\Omega_\delta} \, \rho_R(x) \, \emph{\text{d}}x,
\end{multline*}
where $\displaystyle x \mapsto \rho_R(x) := \fint_{\Gamma^*} |v_k(x)|^2 \, \dd k$ is interpreted as the ($\Ell$-periodic) particle density. The above expresses the idea that even if the observation inequality involves $\Omega_\delta^\Ell$, the actual observation is indeed performed only once in $\Omega_\delta$.
\end{remark}

\medskip

\subsection{Structure of the proof} To prove Theorem~\ref{theo:obsfinal}, we first show a quantum/classical stability property between solutions to the von Neumann equation~\eqref{eq:vonNeumann} and to the Liouville equation~\eqref{eq:Liouville} in Theorem~\ref{theo:stability}. To quantify of form of ``distance'' between the quantum and classical solutions, we rely on an optimal-transport-like pseudo-metric, defined in Subsection~\ref{subsec:optimaltransp}. Once stability is obtained, yielding a control of the pseudo-metric at time $t$ by its value at time $0$, we consider suitable quantum/classical pairs at initial time, using notions of Töplitz and Husimi transforms (Definitions~\ref{def:periodizedToplitz} and~\ref{def:periodizedHusimi}) allowing to bound the discrepancy between the quantum and the classical observability quantities (Propositions~\ref{prop:boundToplitz} and~\ref{prop:EhbarHusimi}). We conclude to Theorem~\ref{theo:obsfinal} using the observability property for the Liouville equation (Lemma~\ref{lem:liouvilleobs}).

\section{Liouville-von Neumann stability} \label{sec:stability}

In this section, we relate the quantum von Neumann equation~\eqref{eq:vonNeumann} with its classical counterpart, the Liouville equation. We introduce the notion of classical/quantum optimal transport pseudo-metric in the periodic setting, based on the one of \cite{golse2022quantitative}. This allows us to obtain a notion of Liouville-von Neumann stability in Theorem~\ref{theo:stability}, a periodic counterpart to \cite[Theorem 3.1]{golse2022quantitative}.

\subsection{Optimal-transport-like pseudo-metric} \label{subsec:optimaltransp} We first introduce the notion of coupling between a classical density $f \in \mathcal{P}_\Ell$ and a quantum density operator $R \in \ScEll$.
\begin{definition} [\textbf{Classical-Quantum coupling}] \label{def:class-quandcoupl}
    A coupling between $f \in \mathcal{P}_\Ell$ and $R \in \ScEll$ is a measurable operator-valued function
    \begin{equation*}
        Q : \R^d \times \R^d  \longrightarrow  \ScEll
    \end{equation*}
    which is $\Ell$-periodic in its first variable $x$ and satisfies the ``marginal'' constraints
    \begin{equation} \label{eq:marginalconstraintscoupling}
        \forall \, (x,\xi) \in \Gamma \times \R^d, \quad \perTr(Q(x,\xi)) = f(x,\xi) \qquad \text{and } \qquad \iint_{\Gamma \times \R^d} Q(x, \xi) \, \dd x \, \dd \xi = R.
    \end{equation}
    The set of couplings between $f$ and $R$ is denoted by $\C(f,R)$.
\end{definition}

\medskip

\noindent We now introduce the notion of quadratic transportation cost, as an analogue to the the one introduced in~\cite{golse2022quantitative}, in the $\Ell$-periodic setting. We highlight that the $\Ell$-periodicity is ensured by the consideration of $|P_\Gamma(x-y)|^2$ instead of $|x-y|^2$. However, $y \mapsto |P_\Gamma(x-y)|^2 \notin H^2_{\rm per}(\Gamma)$ and such regularity is necessary in the proof. Therefore, a regularization function $\Theta$ is introduced, ensuring $y \mapsto \Theta(|P_\Gamma(x-y)|^2) \in H^2_{\rm per}(\Gamma)$.

\medskip

\begin{definition}
    [\textbf{Transportation cost}] \label{def:class-quandcost} For any $\lambda > 0$, we define the transportation costs $c_\lambda$ and $\hat{c}_\lambda$ as operator-valued functions of $(x,\xi) \in \R^d \times \R^d$, such that $c_\lambda(x,\xi)$ acts on $L^2(\R^d)$ and $\hat{c}_\lambda(x,\xi)$ acts on $L^2_{\rm per}(\Gamma)$ as
\begin{equation} \label{eqdef:transpcost}
\begin{split}
    \forall \psi \in L^2(\R^d), \qquad    c_{\lambda}(x,\xi) \, \psi \,(y) := \left\{\lambda^2 \, \Theta(|P_{\Gamma}(x-y)|^2) + (\xi + i \hbar \nabla_y)^2 \right\} \, \psi \,(y), \\
      \forall \varphi \in L^2_{\rm per}(\Gamma), \qquad    \hat{c}_{\lambda}(x,\xi) \, \varphi \, (y) := \left\{\lambda^2 \, \Theta(|P_{\Gamma}(x-y)|^2) + (\xi + i \hbar \nabla_y)^2 \right\}  \, \varphi \, (y).
\end{split}
\end{equation}
In the above formula, $(\xi + i \hbar \nabla_y)^2$ stands for $(\xi + i \hbar \nabla_y) \cdot (\xi + i \hbar \nabla_y)$ and, recalling the notation $\gamma_- = \inf_{z \in \partial \Gamma} |z|$, for any $r \geq 0$,
\begin{equation} \label{eqdef:Theta}
    \Theta (r) := \int_0^r \left( 1 - \frac{r_*}{\gamma_{-}}\right)_+ \dd r_*.
\end{equation}
Both $c_\lambda$ and $\hat{c}_\lambda$ are $\Ell$-periodic in $x$ and $c_\lambda \in \ScEll$.
\end{definition} 
\noindent Note that $y\mapsto \Theta(|P_{\Gamma}(x-y)|^2) \in H^2_{\rm per}(\Gamma)$ and, recalling that $\gamma_+ = \sup_{z \in \partial \Gamma} |z|$, we have for any $x,y \in \R^d$,
\begin{equation}
\label{eq:equivalenceThetaPGammasqr}
    \frac{\gamma_-}{2\gamma_+} \, |P_{\Gamma}(x-y)|^2 \leq \Theta(|P_{\Gamma}(x-y)|^2) \leq |P_{\Gamma}(x-y)|^2 \quad \text{and} \quad 0 \leq \Theta' \leq 1.
\end{equation}
With the notions of periodic trace (Definition~\ref{def:periodicTrace}), couplings and transportation cost at hand, we define the transportation classical/quantum pseudo-metric:

\begin{definition} [\textbf{Transportation pseudo-metric}] \label{def:class-quandpseudometric} For each $f \in \mathcal{P}_\Ell$ and $R \in \ScEllone$, we set
\begin{equation} \label{eqdef:pseudometric}
    E_{\hbar,\lambda}(f,R) := \inf_{Q \in \C(f,R)} \left(\iint_{\Gamma \times \R^d} \perTr \left(Q(x,\xi)^\frac12 \, c_{\lambda}(x,\xi) \, Q(x,\xi)^\frac12 \right) \dd x \, \dd \xi \right)^\frac12.
\end{equation}
\end{definition}

\noindent It is well-defined (could be infinite) as for any $x,\xi$, we indeed have $Q(x,\xi)^\frac12 \, c_{\lambda}(x,\xi) \, Q(x,\xi)^\frac12 \in \ScEll$.

\subsection{Flows and Bloch flows} \label{subsec:flowsBlochflows} Working in the $\Ell$-periodic crystal setting requires to make use of the Bloch decomposition. In this subsection, we present the Bloch decomposition $(\HH_k)_{k \in \Gamma^*}$ of the Hamiltonian operator $\HH$, the corresponding family of classical Hamiltonian functions $(H_k)_{k \in \Gamma^*}$, as well as the family of quantum flow operators $(U_k(t))_k$ and classical flow applications $(\Phi_{k,t})_k$ associated to the corresponding, respectively, quantum and classical dynamics. Recall that the quantum Hamiltonian associated with our problem is
\begin{equation*} 
\HH :=  -\frac{\hbar^2}{2} \Delta_y + V(y),
\end{equation*}
which, as an operator of $\ScEll$, admits a Bloch decomposition which writes (by direct computation, or see~\cite[Chapter 7]{lewin2024spectral}), for any $k \in \Gamma^*$,
\begin{equation} \label{eq:quantumHamiltonian_k}
\HH_k :=  \frac{1}{2}(-i \hbar\nabla_y + \hbar k)^2 + V(y).
\end{equation}
The associated von Neumann flow operators are denoted respectively $t \mapsto U(t)$ and $t \mapsto U_k(t)$. Remark that, while $U(t)$ acts on $L^2(\R^d)$, $U_k(t)$ acts on $L^2_{\rm per}(\Gamma)$. The classical counterparts to~\eqref{eq:quantumHamiltonian} and~\eqref{eq:quantumHamiltonian_k} respectively write, for any $(x, \xi) \in \R^d \times \R^d$,
\begin{equation*}
    H(x,\xi) :=  \frac12|\xi|^2 + V(x),
\end{equation*}
and
\begin{equation}
    H_k(x,\xi) :=  \frac12|\xi + \hbar k|^2 + V(x).
\end{equation}
We define, for any $(x,\xi) \in \R^d \times \R^d$, $t \mapsto \Phi_t(x,\xi)$ as the flow associated with the ODE
\begin{equation} 
    \begin{cases}
    \dot{X} &= \phantom{-}\partial_{\xi}H(X,\Xi) = \Xi, \\
    \dot{\Xi} &= - \partial_{x}H(X,\Xi) = -\nabla V (X), \qquad \qquad (X(0), \Xi(0)) = (x,\xi).
    \end{cases}
\end{equation}
Then, for any $k \in \Gamma^*$, the $k$-flow $\Phi_{k,\cdot}(x,\xi)$ defined by

\begin{equation} \label{eqdef:flow_k}
 \forall t \in \R, \qquad    \Phi_{k,t}(x,\xi) := \Phi_t(x,\xi + \hbar k) - (0, \, \hbar k) \phantom{ \qquad \forall t \in \R, }
\end{equation}

\medskip

\noindent is the flow associated to the system, indexed by $k \in \Gamma^*$,
    \begin{equation} \label{eq:trajectory_k}
        \begin{cases}
        \dot{X}_k &= \phantom{-}\partial_{\xi}H_k(X_k,\Xi_k) = \Xi_k + \hbar k,\\
        \dot{\Xi}_k &= -\partial_{x}H_k(X_k,\Xi_k) = -\nabla V(X_k),
        \end{cases} \qquad \qquad (X_k(0), \, \Xi_k(0)) = (x,\xi).
    \end{equation}
Note that it effectively appears in~\eqref{eq:trajectory_k} that the effective momentum $\dot{X}_k$ is the standard momentum $\Xi_k$ plus the crystal momentum $\hbar k$.

\medskip

\noindent Note that the $\Ell$-periodicity of $V$ implies that for any $(t,x,\xi) \in \R \times \R^d \times \R^d$ and $\ell \in \Ell$, we have
\begin{equation} \label{eq:pseudoPeriodicityPhi}
    \Phi_t(x+\ell,\xi) = \Phi_t(x,\xi) + (\ell,0).
\end{equation}

\subsection{Stability result} 
 We now state our Liouville-von Neumann stability result.

\medskip

\begin{theorem} [\textbf{Liouville-von Neumann stability}] \label{theo:stability}
    Let $f^{\rm in} \in \mathcal{P}_\Ell$ and $R^{\rm in} \in \ScEllone$. For all $t \geq 0$, set
    \begin{equation*}
        R(t) = U(t)^* \, R^{\rm in} \, U(t),  \qquad f(t, X, \Xi):=f^{\rm in} \left( \Phi_{-t}(X, \Xi) \right) \quad \text{for a.e. } (X,\Xi) \in \R^d \times \R^d.
    \end{equation*}
Then for each $\lambda > 0$ and $t \geq 0$, one has
\begin{equation}
    E_{\hbar,\lambda}(f(t),R(t)) \leq E_{\hbar,\lambda}(f^{\rm in},R^{\rm in}) \; \exp \left(\eta_{\lambda,V} \, t \right),
\end{equation}
with $\eta_{\lambda,V} := \frac{\gamma_+}{\gamma_-} \left(\lambda + \frac{\mathrm{Lip}(\nabla V)^2}{\lambda} \right)$.
\end{theorem}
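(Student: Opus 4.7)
The plan is to adapt the Golse--Paul quantum/classical stability argument (cf.\ \cite[Theorem 3.1]{golse2022quantitative}) to the $\Ell$-periodic setting: propagate a near-optimal coupling between $f^{\rm in}$ and $R^{\rm in}$ jointly through the classical flow $\Phi_t$ and the quantum unitary $U(t)$, differentiate the resulting transportation cost, and absorb the semiclassical defect into $c_\lambda$ via a Grönwall estimate.

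For $\epsilon>0$, pick $Q^{\rm in} \in \C(f^{\rm in}, R^{\rm in})$ almost realizing $E_{\hbar, \lambda}(f^{\rm in}, R^{\rm in})^2$ up to $\epsilon$, and set
\[
Q(t, x, \xi) := U(t)^* \, Q^{\rm in}(\Phi_{-t}(x, \xi)) \, U(t), \qquad (x, \xi) \in \R^d \times \R^d.
\]
I would first verify $Q(t) \in \C(f(t), R(t))$: the $\Ell$-periodicity in $x$ follows from~\eqref{eq:pseudoPeriodicityPhi} and the periodicity of $Q^{\rm in}$; the first marginal $\perTr(Q(t, x, \xi)) = f(t, x, \xi)$ uses the invariance of $\perTr$ under conjugation by $U(t) \in \mathcal{S}_\Ell$ together with the transport formula for $f$; the second marginal uses the volume-preservation of $\Phi_t$ combined with the $\Ell$-periodicity of $Q^{\rm in}$ to reduce the integral on $\Phi_{-t}(\Gamma\times\R^d)$ back to one on $\Gamma\times\R^d$. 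Using cyclicity of $\perTr$ and the change of variables $(x,\xi) = \Phi_t(x',\xi')$, one then rewrites
\[
\mathcal{E}(t) := \iint_{\Gamma \times \R^d} \perTr\!\bigl(Q(t)^{1/2} c_\lambda Q(t)^{1/2}\bigr) \, \dd x \, \dd \xi = \iint_{\Gamma \times \R^d} \perTr\!\bigl(Q^{\rm in}(x, \xi)^{1/2} C(t, x, \xi) Q^{\rm in}(x, \xi)^{1/2}\bigr) \, \dd x \, \dd \xi,
\]
with $C(t, x, \xi) := U(t) \, c_\lambda(\Phi_t(x, \xi)) \, U(t)^*$ carrying all the time dependence.

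Differentiating $C$ via the Heisenberg-type evolution and the flow identity $\tfrac{\dd}{\dd t} c_\lambda(\Phi_t) = \{H, c_\lambda\}(\Phi_t)$, and using that $\HH$ commutes with $U(t)$, yields $\dot C(t, x, \xi) = U(t) \, \mathcal{D}(\Phi_t(x, \xi)) \, U(t)^*$ with
\[
\mathcal{D}(x, \xi) := \tfrac{i}{\hbar}[\HH, c_\lambda(x, \xi)] + \{H, c_\lambda\}(x, \xi)
\]
the semiclassical defect between the quantum and classical propagations of $c_\lambda$. The crux is the operator inequality $\mathcal{D}(x, \xi) \leq 2 \eta_{\lambda, V} \, c_\lambda(x, \xi)$ in the quadratic-form sense. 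Expanding $\mathcal{D}$ using $\HH = -\tfrac{\hbar^2}{2}\Delta_y + V$ and $c_\lambda = \lambda^2 \Theta(|P_\Gamma(x-y)|^2) + A^2$ with $A := \xi + i\hbar \nabla_y$ self-adjoint, several pieces cancel (notably the $i\hbar\Delta V(y)$ contribution and the $\pm 2i\hbar (\nabla V(y) - \nabla V(x))\cdot\nabla_y$ pieces arising respectively from $[V,(\xi+i\hbar\nabla_y)^2]$ and from $2i\hbar\nabla V(x)\cdot\nabla_y$ in the Poisson bracket), leaving
\[
\mathcal{D}(x,\xi) = \bigl\{A, \, \nabla V(y) - \nabla V(x) + \tfrac{\lambda^2}{2} (\nabla\Theta)(|P_\Gamma(x-y)|^2)\bigr\}_+
\]
where $\{\cdot,\cdot\}_+$ denotes the anti-commutator. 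The $\Ell$-periodicity of $\nabla V$ gives $|\nabla V(y) - \nabla V(x)| \leq \mathrm{Lip}(\nabla V)\, |P_\Gamma(y-x)|$, and the structural bounds $0 \leq \Theta' \leq 1$ together with $\Theta'(r)\cdot r \leq 2\Theta(r)$ (direct from~\eqref{eqdef:Theta}) control the $(\nabla\Theta)(|P_\Gamma(x-y)|^2)$ contribution. A Young-type anti-commutator inequality of the form $\{A, B\}_+ \leq \alpha A^2 + \alpha^{-1} B^2$, combined with~\eqref{eq:equivalenceThetaPGammasqr} to convert $|P_\Gamma(\cdot)|^2$ into $\Theta(|P_\Gamma(\cdot)|^2)$, then yields the desired bound with constant $2\eta_{\lambda,V}$, the $\gamma_+/\gamma_-$ factor arising precisely from the equivalence~\eqref{eq:equivalenceThetaPGammasqr}. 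Integrating gives $\dot{\mathcal{E}}(t) \leq 2 \eta_{\lambda, V} \, \mathcal{E}(t)$; Grönwall combined with $E_{\hbar,\lambda}(f(t),R(t))^2 \leq \mathcal{E}(t)$ and $\epsilon \to 0$ concludes.

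The main technical difficulties I anticipate are (i) the rigorous justification of the cyclicity, change-of-variables, and integration-by-parts manipulations when $c_\lambda$ is unbounded --- likely resolved by reducing everything via the Bloch decomposition to fiber-wise trace-class operators on $L^2_{\rm per}(\Gamma)$ and, if needed, mollifying $Q^{\rm in}$; and (ii) the fine Young-type inequality balancing the $(\nabla V(y) - \nabla V(x))$ and $(\nabla\Theta)(|P_\Gamma(x-y)|^2)$ terms against $A^2$ and $\lambda^2 \Theta$ with the sharp constant $2\eta_{\lambda,V}$, which is the genuine periodic-setting novelty (the non-periodic case of~\cite{golse2022quantitative} corresponds to $\Theta(r) = r$, for which the $\nabla\Theta$ contribution is simply $2(x-y)$; here, the tweaked cut-off $\Theta$ from~\eqref{eqdef:Theta} is not quadratic, and the structural bounds from~\eqref{eqdef:Theta} and~\eqref{eq:equivalenceThetaPGammasqr} have to be exploited, which is also what produces the $\gamma_+/\gamma_-$ factor in $\eta_{\lambda,V}$).
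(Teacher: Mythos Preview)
Your proposal is correct and follows essentially the same route as the paper: propagate an (almost) optimal coupling by $Q(t)=U(t)^*Q^{\rm in}\!\circ\Phi_{-t}\,U(t)$, reduce to estimating the time derivative of $\langle\,\cdot\,|\,c_\lambda(\Phi_t)\,|\,\cdot\,\rangle$ through the semiclassical defect $\tfrac{i}{\hbar}[\HH,c_\lambda]+\{H,c_\lambda\}$, express this as an anti-commutator with $A=\xi+i\hbar\nabla_y$, bound it by $2\eta_{\lambda,V}c_\lambda$ via Peter--Paul together with $0\le\Theta'\le1$ and~\eqref{eq:equivalenceThetaPGammasqr}, and close by Gr\"onwall. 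The only organizational difference is that the paper carries out the Bloch decomposition at the outset---working fibre-by-fibre with $\HH_k$, $\hat c_\lambda(\cdot,\cdot-\hbar k)$ and the shifted flow $\Phi_{k,t}$ of~\eqref{eqdef:flow_k}, then averaging in $k$---whereas you compute the defect inequality directly on $L^2(\R^d)$ and relegate Bloch to the justification of the $\perTr$ manipulations; since the operator inequality $\mathcal D\le 2\eta_{\lambda,V}c_\lambda$ is preserved under the fibration, the two presentations are equivalent.
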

\begin{proof}
    Let $Q^{\rm in} \in \C(f^{\rm in}, R^{\rm in})$. Set, for any $t \in \R$ and $(X,\Xi) \in \R^d \times \R^d$,
    \[
    Q(t,X,\Xi) := U(t)^* \, Q^{\rm in} \circ \Phi_{-t} (X,\Xi) \, U(t)
    \]
 and
\begin{equation} \label{eqdef:fullEnergyproof}
\E(t) := \iint_{\Gamma \times \R^d} \perTr \left(Q(t,X,\Xi)^{\frac12} \, c_\lambda (X,\Xi) \, Q(t,X,\Xi)^\frac12 \right) \, \dd X \, \dd \Xi.
\end{equation}
    Note that $Q^{\rm in}$ $\Ell$-periodic in $x$ and~\eqref{eq:pseudoPeriodicityPhi} imply that $X \mapsto Q^{\rm in} \circ \Phi_{-t}(X,\Xi)$ is also $\Ell$-periodic in $X$. Hence, we indeed have $Q(t) \in \C(f(t),R(t))$, and in particular,
    \[
    \E(t) \geq E_{\hbar,\lambda}(f(t),R(t)), \qquad t \geq 0.
    \]
    Since $c_\lambda$ and $Q(t,\cdot,\cdot)$ are $\Ell$-periodic in the $x$-variable, so is
    \[
    (X,\Xi) \mapsto \perTr \left(Q(t,X,\Xi)^{\frac12} \, c_\lambda (X,\Xi) \, Q(t,X,\Xi)^\frac12 \right).
    \]
    Applying then Lemma~\ref{lem:LiouvilleChangeVar} (changing variables $(X, \Xi) \mapsto (x,\xi) = \Phi_{-t}(X,\Xi)$) yields
    \[
    \E(t) := \iint_{\Gamma \times \R^d} \perTr \left(Q^{\rm in}(x,\xi)^{\frac12} \, U(t) \, c_\lambda (\Phi_t(x,\xi)) \, U(t)^* \,Q^{\rm in}(x,\xi)^{\frac12} \right) \, \dd x \, \dd \xi.
    \] 
    Let us now consider the family of operators $\{Q^{\rm in}_k(x,\xi)\}_{k \in \Gamma^*}$, the Bloch transform of $Q^{\rm in}(x,\xi)$ (which we recall belongs to $\ScEllone$).  By Corollary~\ref{cor:fundamentalPropertyBlochOp}, we have that for any $k \in \Gamma^*$,
\begin{multline}
    \left(Q^{\rm in}(x,\xi)^{\frac12} \, U(t) \, c_\lambda (\Phi_t(x,\xi)) \, U(t)^* \,Q^{\rm in}(x,\xi)^{\frac12}\right)_k \\
    = Q^{\rm in}_k(x,\xi)^{\frac12} \, (U(t))_k \, \left( c_\lambda (\Phi_t(x,\xi)) \right)_k \, \left(U(t)^* \right)_k \, Q^{\rm in}_k(x,\xi)^{\frac12},
\end{multline}
which, using Lemma~\ref{lem:BlochTransFormulas} and the fact that $\Phi_t(x,\xi) - (0,\hbar k) = \Phi_{k,t}(x,\xi - \hbar k)$, is exactly
\[
Q^{\rm in}_k(x,\xi)^{\frac12} \, U_k(t) \,  \hat{c}_\lambda (\Phi_{k,t}(x,\xi - \hbar k)) \, U_k(t)^* \, Q^{\rm in}_k(x,\xi)^{\frac12},
\]
so that
    \begin{equation} \label{eq:trace_k}
    \E(t) = \iint_{\Gamma \times \R^d} \fint_{\Gamma^*} \mathrm{Tr} \left(Q^{\rm in}_k(x,\xi)^{\frac12} \, U_k(t) \, \hat{c}_\lambda (\Phi_{k,t}(x,\xi - \hbar k)) \, U_k(t)^* \,Q^{\rm in}_k(x,\xi)^{\frac12} \right) \, \dd k \, \dd x \, \dd \xi.
    \end{equation}
Exchanging integrals in~\eqref{eq:trace_k} we get
\[
\E(t) =  \fint_{\Gamma^*} \E_k(t) \, \dd k,
\]
with
\[
\E_k(t) := \iint_{\Gamma \times \R^d} \mathrm{Tr} \left(Q^{\rm in}_k(x,\xi)^{\frac12} \, U_k(t) \, \hat{c}_\lambda (\Phi_{k,t}(x,\xi - \hbar k) ) \, U_k(t)^* \,Q^{\rm in}_k(x,\xi)^{\frac12} \right) \, \dd x \, \dd \xi.
\]
We obtain, performing the change of variable $\xi \mapsto \xi - \hbar k$ still denoted by $\xi$ for convenience,
\[
\E_k(t) := \iint_{\Gamma \times \R^d} \mathrm{Tr} \left(Q^{\rm in}_k(x,\xi + \hbar k)^{\frac12} \, U_k(t) \, \hat{c}_\lambda (\Phi_{k,t}(x,\xi)) \, U_k(t)^* \, Q^{\rm in}_k(x,\xi + \hbar k)^{\frac12} \right) \, \dd x \, \dd \xi.
\]
For each $k \in \Gamma^*$ and $(x,\xi) \in \R^d \times \R^d$, we consider $(e_j(x,\xi,k))_{j \in \N}$ the $L^2_{\rm per}(\Gamma)$-complete orthonormal system of eigenvectors of $Q^{\rm in}_k(x,\xi + \hbar k)$, which we know from Proposition~\ref{prop:blochTfunbd} is a Trace-class nonnegative self-adjoint operator on $L^2_{\rm per}(\Gamma)$, associated with the eigenvalues $(\rho_j(x,\xi,k))_{j \in \N}$. Straightforwardly, both $e_j$ and $\rho_j$ are $\Ell$-periodic in the variable $x$. It then comes that
\begin{multline}
\mathrm{Tr} \left(Q^{\rm in}_k(x,\xi + \hbar k)^{\frac12} \, U_k(t) \, \hat{c}_\lambda (\Phi_{k,t}(x,\xi)) \, U_k(t)^* \, Q^{\rm in}_k(x,\xi + \hbar k)^{\frac12} \right) =\\
\sum_{j \in \N} \rho_j (x ,\xi, k) \, \left\langle U_k(t) e_j(x, \, \xi, k) |\hat{c}_\lambda(\Phi_{k,t}(x,\xi)) | U_k(t) e_j(x, \, \xi, k)  \right\rangle.
\end{multline}
We highlight that the brackets are taken in $L^2_{\rm per}(\Gamma)$. For any $\phi \in \mathcal{C}^\infty_{\rm per}(\Gamma)$, the map
\[
t \mapsto \left\langle U_k(t) \phi |\hat{c}_\lambda(\Phi_{k,t}(x,\xi)) | U_k(t) \phi \right\rangle
\]
is $\mathcal{C}^1$ on $\R$ and, by the same manipulations as in~\cite[proof of Theorem 3.1]{golse2022quantitative}, one has, recalling that $t \mapsto U_k(t)$ is the quantum flow associated to $\HH_k$ and $t \mapsto \Phi_{k,t}$ is the  classical flow associated with $H_k$,
\begin{multline*}
    \frac{\dd }{\dd t} \left\langle U_k(t) \phi |\hat{c}_\lambda(\Phi_{k,t}(x,\xi)) | U_k(t) \phi \right\rangle = \left\langle U_k(t) \phi \left| \frac{i}{\hbar} \left[\mathcal{H}_k, \, \hat{c}_\lambda(\Phi_{k,t}(x,\xi) )\right] \right| U_k(t) \phi \right\rangle \\
    + \left\langle U_k(t) \phi |\left\{H_k, \, \hat{c}_\lambda \right\} (\Phi_{k,t}(x,\xi)) | U_k(t) \phi \right\rangle.
\end{multline*}
Let us point out that for all $x,y \in \R^d$, we have
\[
\nabla_x \;  \Theta(|P_{\Gamma}(x-y)|^2) = 2 P_{\Gamma}(x-y) \Theta'(|P_{\Gamma}(x-y)|^2).
\]
On the one hand, denoting $(X_k, \Xi_k) = \Phi_{k,t}(x,\xi)$, we have
\begin{align*}
\left\{H_k, \, \hat{c}_\lambda \right\} (X_k, \Xi_k) &= - \nabla_x H_k (X_k,\Xi_k) \cdot  \nabla_\xi \hat{c}_\lambda (X_k,\Xi_k) + \nabla_\xi H_k (X_k,\Xi_k) \cdot  \nabla_x \hat{c}_\lambda (X_k,\Xi_k)\\
&= -\nabla V(X_k) \cdot 2 \left(\Xi_{k} + i \hbar \nabla_y \right) - \left(\Xi_{k} + \hbar k \right) \cdot \left[2 \, \lambda^2  \, \Theta'(|P_{\Gamma}(X_k-y)|^2) \, P_{\Gamma}(X_k-y) \right].
\end{align*}
On the other hand,
\begin{align*}
    \frac{i}{\hbar} \left[\mathcal{H}_k, \, \hat{c}_\lambda(X_k, \Xi_k)  \right] &= \frac{i}{\hbar} \left[V(y), \, (\Xi_k + i \hbar \nabla_y)^2 \right] \\
    &+ \frac{i}{\hbar} \left[V(y), \,\lambda^2 \Theta(|P_{\Gamma}(X_k-y)|^2) \right] \\
&+\frac{i}{\hbar} \left[\frac{1}{2}(-i \hbar\nabla_y + \hbar k)^2, \, (\Xi_k + i \hbar \nabla_y)^2 \right] \\
&+\frac{i}{\hbar} \left[\frac{1}{2}(-i \hbar\nabla_y + \hbar k)^2, \, \lambda^2  \Theta(|P_{\Gamma}(X_k-y)|^2) \right],
\end{align*}
which, thanks to Lemma~\ref{lem:commutators} in Appendix~\ref{app:lemmas}, yields
\begin{align*}
    \frac{i}{\hbar} \left[\mathcal{H}_k, \, \hat{c}_\lambda(X_k, \Xi_k )  \right] &= ( \Xi_k + i \hbar \nabla_y) \cdot \nabla V(y) + \nabla V(y) \cdot \left( \Xi_k + i \hbar \nabla_y \right) \\
&+\lambda^2 (-i\hbar \nabla_y + \hbar k) \cdot \left[ P_{\Gamma}(y-X_k) \; \Theta'(|P_\Gamma(X_k-y)|^2 ) \right] \\
&+ \lambda^2 P_{\Gamma}(y-X_k) \; \Theta'(|P_\Gamma(X_k-y)|^2 ) \cdot (-i\hbar \nabla_y + \hbar k)
\end{align*}
All in all, 
\begin{align*}
   \big\{H_k, \, \hat{c}_\lambda \big\} (X_k, \Xi_k) + \frac{i}{\hbar} &\left[\mathcal{H}_k, \, \hat{c}_\lambda(X_k, \Xi_k )  \right] = \\
   &- \left(\Xi_{k} + i \hbar \nabla_y \right)  \cdot \nabla V(X_k) -  \nabla V(X_k)  \cdot    \left(\Xi_{k} + i \hbar \nabla_{y} \right) \\
   &+( \Xi_k + i \hbar \nabla_y) \cdot \nabla V(y) + \nabla V(y) \cdot \left( \Xi_k + i \hbar \nabla_y \right) \\
   &+\lambda^2 \left(\Xi_{k} + \hbar k \right) \cdot \left[ P_{\Gamma}(X_k-y) \, \Theta'(|P_\Gamma(X_k-y)|^2 ) \right] \\
 &+ \lambda^2 \left[P_{\Gamma}(X_k-y) \; \Theta'(|P_\Gamma(X_k-y)|^2 ) \right] \cdot \left(\Xi_{k} + \hbar k \right) \\
&+\lambda^2 (-i\hbar \nabla_y + \hbar k) \cdot \left[ P_{\Gamma}(y-X_k) \, \Theta'(|P_\Gamma(y-X_k)|^2 ) \right] \\
&+ \lambda^2 \left[P_{\Gamma}(y-X_k) \; \Theta'(|P_\Gamma(y-X_k)|^2 )  \right] \cdot (-i\hbar \nabla_y + \hbar k).
\end{align*}
Using the identity $P_{\Gamma}(-z) = -P_{\Gamma}z$ almost everywhere (in $\R^d \setminus (\partial \Gamma + \Ell)$), coming from $-\Ell = \Ell$, the above equals
\begin{align*}
&-\left(\Xi_{k} + i \hbar \nabla_{y} \right) \cdot (\nabla V(X_k) - \nabla V(y)) +  (\nabla V(X_k) - \nabla V(y)) \cdot  \left(\Xi_{k} + i \hbar \nabla_{y} \right) \\
&+\lambda^2  \left(\Xi_{k} + i \hbar \nabla_y \right) \cdot \left[ P_{\Gamma}(X_k-y) \; \Theta'(|P_{\Gamma}(X_k-y)|^2) \right] \\
&+ \lambda^2 \, P_{\Gamma}(X_k-y)  \, \Theta'(|P_{\Gamma}(X_k-y)|^2) \cdot \left(\Xi_{k} + i \hbar \nabla{y} \right).
\end{align*}
Using Peter-Paul's inequality, by which for positives $x,y$ it holds that $2 xy \leq \lambda x^2 + \lambda^{-1}y^2$, the above is less than
\begin{multline*}
\lambda \left(\lambda^2 \, \left|\Theta'(|P_{\Gamma}(X_k-y)|^2) \, P_{\Gamma}(X_k-y)  \right|^2 + (\Xi_{k} + i \hbar \nabla_{y})^2 \right) \\
+ \frac{1}{\lambda}  \left(\lambda^2 |\nabla V(X_k) - \nabla V(y)|^2 + (\Xi_{k} + i \hbar \nabla_{y})^2 \right).
\end{multline*}
Since $0 \leq \Theta' \leq 1$ and $V$ is $\Ell$-periodic, the above is itself less than
\begin{equation*}
\lambda  \left(\lambda^2 |P_{\Gamma}(X_k-y) |^2 + (\Xi_{k} + i \hbar \nabla_{y})^2 \right) + \frac{\mathrm{Lip}(\nabla V)^2}{\lambda} \left(\lambda^2 |P_{\Gamma}(X_k-y) |^2 + (\Xi_{k} + i \hbar \nabla_{y})^2 \right), 
\end{equation*}
which, using the fact that $\displaystyle |P_{\Gamma}(X_k-y) |^2 \leq \frac{2 \gamma_+}{\gamma_-} \, \Theta(|P_{\Gamma}(X_k-y) |^2)$, is less than 
\[
\frac{2 \gamma_+}{\gamma_-}\left( \lambda + \frac{\mathrm{Lip}(\nabla V)^2}{\lambda} \right) \, \hat{c}_\lambda(X_k,\Xi_k).
\]
As $(X_k,\Xi_k)$ is by definition $\Phi_{k,t}(x,\xi)$, we have thus proven that, for any $t \geq 0$,
\[
\frac{\dd }{\dd t} \left\langle U_k(t) \phi |\hat{c}_\lambda(\Phi_{k,t}(x,\xi)) | U_k(t) \phi \right\rangle \leq \frac{2 \gamma_+}{\gamma_-}\left( \lambda + \frac{\mathrm{Lip}(\nabla V)^2}{\lambda} \right) \left\langle U_k(t) \phi |\hat{c}_\lambda(\Phi_{k,t}(x,\xi))| U_k(t) \phi \right\rangle.
\]
Similar density, Grönwall and linearity arguments as in~\cite[proof of Theorem 3.1]{golse2022quantitative} then lead to
\begin{multline*}
\mathrm{Tr} \left(Q^{\rm in}_k(x,\xi + \hbar k)^{\frac12} \, U_k(t) \, \hat{c}_\lambda (\Phi_{k,t}(x,\xi)) \, U_k(t)^* \, Q^{\rm in}_k(x,\xi + \hbar k)^{\frac12} \right) \\
\leq \exp \left(\eta_{\lambda,V} \, t \right) \, \mathrm{Tr} \left(Q^{\rm in}_k(x,\xi + \hbar k)^{\frac12} \, \hat{c}_\lambda (x,\xi) \, Q^{\rm in}_k(x,\xi + \hbar k)^{\frac12} \right),
\end{multline*}
  with $\eta_{\lambda,V} := \frac{2 \gamma_+}{\gamma_-} \left(\lambda + \frac{\mathrm{Lip}(\nabla V)^2}{\lambda} \right)$, implying, by integrating in $(x,\xi) \in \Gamma \times \R^d$,
    \[
    \E_k(t) \leq \E_k(0) \, \exp \left(2\eta_{\lambda,V} \, t \right).
    \]
 Finally integrating in $k \in \Gamma^*$, we get
    \[
    \E(t) \leq \E(0) \, \exp \left(2\eta_{\lambda,V} \, t \right),
    \]
    and thus
    \[
    E_{\hbar,\lambda}(f(t),R(t))^2 \leq \E(0) \, \exp \left(2\eta_{\lambda,V} \, t \right).
    \]
    The theorem follows by minimizing the right-hand side as $Q^{\rm in}$ runs through $\C(f^{\rm in}, R^{\rm in})$.
\end{proof}

\section{Properties of periodic coherent states, Töplitz operator and Husimi density} \label{sec:propertiesSchroTopHus}

Recall that we introduced the coherent states $\coh{q}{p}$, $\Ell$-periodic coherent states $\percoh{q}{p}$, $\Ell$-periodic Töplitz operator $T_\Ell[\bullet]$ and $\Ell$-periodic Husimi density $\husimi[\bullet]$ in Definitions~\ref{def:periodizedCoherent}--~\ref{def:periodizedHusimi}.

\begin{lemma} [\textbf{Basic properties of the $\Ell$-periodic coherent states}] \label{lem:schroProp}
For any $(q,p) \in \R^d \times \R^d$, the $\Ell$-periodic coherent state $\percoh{q}{p}$ defined in~\eqref{eqdef:percohState} satisfy the following properties:
 \begin{itemize}
     \item denoting by $\left\{\coh{q}{p}_k \right\}_{k \in \Gamma^*}$ the Bloch decomposition of $\coh{q}{p}$, the coherent state defined in~\eqref{eqdef:cohState}, we have, for any $k \in \Gamma^*$,
     \begin{equation} \label{eq:BlochCoherent}
         \coh{q}{p}_k = \percoh{q}{p - \hbar k}.
     \end{equation}
          \item the $\Ell$-periodic coherent state $\percoh{q}{p}$ belongs to $L^2_{\rm per}(\Gamma)$, $q \mapsto \percoh{q}{p}$ is $\Ell$-periodic and  
     \begin{equation} \label{eq:percohNormlized}
     \fint_{\Gamma^*}\langle \underline{q,p - \hbar k} \percoh{q}{p - \hbar k}_{L^2_{\rm per}(\Gamma)} \, \dd k = 1.
     \end{equation}
 \end{itemize}
\end{lemma}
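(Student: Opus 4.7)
The strategy is to reduce every assertion to either a direct computation on the Gaussian~\eqref{eqdef:cohState} or an invocation of the Plancherel-type identity for the Bloch transform stated in Appendix~\ref{sec:Bloch}.

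For~\eqref{eq:BlochCoherent}, the plan is to apply the Bloch decomposition formula of Appendix~\ref{sec:Bloch} directly to $\coh{q}{p}$. Up to the paper's convention, the Bloch fiber at $k$ of $\psi\in L^2(\R^d)$ reads essentially $\sum_{\ell\in\Ell}e^{-ik\cdot(y+\ell)}\psi(y+\ell)$; specialising to $\psi=\coh{q}{p}$, one absorbs the phase factor $e^{-ik\cdot(y+\ell)}$ into the plane wave $e^{ip\cdot(y+\ell)/\hbar}$ to produce $e^{i(p-\hbar k)\cdot(y+\ell)/\hbar}$. The summand is then exactly $\coh{q}{p-\hbar k}(y+\ell)$, and summing over $\ell\in\Ell$ yields $\percoh{q}{p-\hbar k}(y)$ by the defining relation~\eqref{eqdef:percohState}.

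For the $L^2_{\rm per}(\Gamma)$ statement, strict $\Ell$-periodicity in $y$ is immediate: the substitution $\ell\mapsto\ell-\ell_0$ in~\eqref{eqdef:percohState} leaves the total sum invariant for any $\ell_0\in\Ell$. Convergence of the series in $L^2_{\rm per}(\Gamma)$ is a standard lattice estimate based on the Gaussian decay $|\coh{q}{p}(y+\ell)|\lesssim e^{-|y+\ell-q|^2/(2\hbar)}$. For the behaviour in $q$, the same change of variables produces $\percoh{q+\ell_0}{p}=e^{ip\cdot\ell_0/\hbar}\,\percoh{q}{p}$: this is a unimodular prefactor, so $\Ell$-periodicity truly holds at the level of $|\percoh{q}{p}|$ and of the associated projector $\proj{\percoh{q}{p}}$, which is how the statement is to be understood (and how it will be used in subsequent sections).

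Finally, for the normalisation~\eqref{eq:percohNormlized}, one uses the direct Gaussian integration $\|\coh{q}{p}\|_{L^2(\R^d)}^2=\int_{\R^d}(\pi\hbar)^{-d/2}e^{-|y-q|^2/\hbar}\,\dd y=1$, combined with the Plancherel identity $\|\psi\|_{L^2(\R^d)}^2=\fint_{\Gamma^*}\|\psi_k\|_{L^2_{\rm per}(\Gamma)}^2\,\dd k$ recalled in Appendix~\ref{sec:Bloch}, and the identification $\coh{q}{p}_k=\percoh{q}{p-\hbar k}$ just established. The only genuinely delicate point is bookkeeping: matching the normalisation constants of the Bloch transform (factors of $|\Gamma|$ versus $|\Gamma^*|$, whether the mean $\fint_{\Gamma^*}$ already absorbs them) with the convention used to write the expansion formula in the first step. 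This is a matter of choosing conventions consistently with Appendix~\ref{sec:Bloch} and does not affect the structure of the argument.
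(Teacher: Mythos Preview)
Your proof is correct and follows essentially the same route as the paper: direct application of the Bloch transform formula to obtain~\eqref{eq:BlochCoherent}, lattice reindexing for periodicity, and the Plancherel identity for~\eqref{eq:percohNormlized}. You are in fact more accurate than the paper on one point: the map $q\mapsto\percoh{q}{p}$ is, as you observe, only $\Ell$-periodic up to the unimodular factor $e^{ip\cdot\ell_0/\hbar}$ (the paper's claim that $\coh{q}{p}(y+\ell)=\coh{q-\ell}{p}(y)$ drops this phase), and your remark that this is harmless because only the associated rank-one projector is used downstream is exactly right.
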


\begin{proof}
First note that the $\Ell$-periodic states are indeed well-defined, as
\[
\sum_{\ell \in \Ell} e^{-\frac{|\ell|^2}{2 \hbar}} < +\infty.
\]
Moreover, the $\Ell$-periodicity of $(q,p,y) \mapsto \percoh{q}{p}(y)$ in both the $q$ and $y$ variables is straightforward from~\eqref{eqdef:percohState} and the facts that $\coh{q}{p}(y + \ell) = \coh{q-\ell}{p}(y)$ and $\Ell = -\Ell$. Equation~\eqref{eq:BlochCoherent} is straightforward from the definition of the Bloch transform~\eqref{eqdef:BlochTransform}, as for any $q,p,y \in \R^d$ and $k \in \Gamma^*$ we have 
\[
\sum_{\ell \in \Ell} \coh{q}{p}(y + \ell) \, e^{-i k \cdot (y + \ell)} = \sum_{\ell \in \Ell} \coh{q}{p - \hbar k}(y + \ell) = \percoh{q}{p-\hbar k}(y).
\]
From the isometry property of the Bloch transform~\eqref{eq:BlochNormsEq} follows from~\eqref{eq:percohNormlized}, as
\[
\fint_{\Gamma^*}\langle \underline{q,p - \hbar k} \percoh{q}{p - \hbar k}_{L^2_{\rm per}(\Gamma)} \, \dd k = \langle q,p | q,p \rangle_{L^2(\R^d)} = 1.
\]
\end{proof}

\noindent In the following proposition, we show that the Husimi transform is indeed a periodized probability density.

\begin{proposition} [\textbf{Basic property of the Husimi density}] \label{prop:HusimiIsAProba}
Consider $R \in \ScEllone$ whose Bloch transform writes, for any $k \in \Gamma^*$,
\[
R_k = |\psi_k \rangle \langle \psi_k |
\]
with $\psi_k \in L^2_{\rm per}(\Gamma)$. Then $\husimi[R]$, the $\Ell$-periodic Husimi transform of $R$, belongs to $\mathcal{P}_\Ell$.
\end{proposition}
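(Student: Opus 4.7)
The plan is to check the three defining properties of $\mathcal{P}_\Ell$ for $\husimi[R]$: $\Ell$-periodicity in the first variable, local integrability, and unit mass on $\Gamma \times \R^d$. The nonnegativity and periodicity are cheap; the core computation is the resolution of identity for the $\Ell$-periodic coherent states, from which normalization falls out.

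\textbf{Nonnegativity and periodicity.} Since $R_k = |\psi_k\rangle\langle\psi_k|$, one has $f_k(q,p) = (2\pi\hbar)^{-d} |\langle \underline{q,p-\hbar k}\,|\,\psi_k\rangle_{L^2_{\rm per}(\Gamma)}|^2 \geq 0$, so $\husimi[R] \geq 0$. For the periodicity in $q$, a direct expansion of the defining series gives $\percoh{q+\ell}{p} = e^{ip\cdot\ell/\hbar}\,\percoh{q}{p}$ for every $\ell \in \Ell$, so $|\langle\percoh{q+\ell}{p-\hbar k}|\psi_k\rangle|^2 = |\langle\percoh{q}{p-\hbar k}|\psi_k\rangle|^2$. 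Thus each $f_k$, and therefore $\husimi[R]$, is $\Ell$-periodic in $q$.

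\textbf{Resolution of identity.} The key intermediate step is the identity
\[
(2\pi\hbar)^{-d}\iint_{\Gamma\times\R^d} \bigl|\langle \underline{q,p}\,|\,\psi\rangle_{L^2_{\rm per}(\Gamma)}\bigr|^2\,\dd q\,\dd p = \|\psi\|^2_{L^2_{\rm per}(\Gamma)},\qquad \psi\in L^2_{\rm per}(\Gamma).
\]
I would prove it by first unfolding the periodization: using $\percoh{q}{p}(y) = \sum_{\ell}\coh{q}{p}(y+\ell)$ and the $\Ell$-periodicity of $\psi$, the partition property of $\{\Gamma+\ell\}$ yields $\langle\percoh{q}{p}|\psi\rangle_{L^2_{\rm per}(\Gamma)} = \int_{\R^d}\overline{\coh{q}{p}(y)}\,\psi(y)\,\dd y$, the integral converging absolutely thanks to Gaussian decay. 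Expanding the modulus squared and performing the $p$-integral first gives $\int_{\R^d} e^{ip\cdot(z-y)/\hbar}\,\dd p = (2\pi\hbar)^d\delta(z-y)$ (rigorously treated via Plancherel on the partial $p$-Fourier transform of the Gaussian $\coh{q}{p}\overline{\coh{q}{p}}$). After collapsing to $y=z$, one is left with $(2\pi\hbar)^d(\pi\hbar)^{-d/2}\int_\Gamma \dd q\int_{\R^d} e^{-|y-q|^2/\hbar}|\psi(y)|^2\,\dd y$. A Fubini-Tonelli exchange and the change of variable $y=y_0+\ell$, $y_0\in\Gamma$, combined with $\sum_\ell\int_\Gamma e^{-|y_0-q+\ell|^2/\hbar}\,\dd q = \int_{\R^d} e^{-|u|^2/\hbar}\,\dd u = (\pi\hbar)^{d/2}$, produces exactly $(2\pi\hbar)^d\|\psi\|^2_{L^2_{\rm per}(\Gamma)}$.

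\textbf{Conclusion.} With this identity in hand, apply Tonelli (integrand nonnegative) in the definition of $\husimi[R]$, then perform the change of variable $p\mapsto p+\hbar k$ inside the $p$-integral (this is a translation in $\R^d$, so it does not affect the integration domain $\Gamma\times\R^d$):
\[
\iint_{\Gamma\times\R^d}\husimi[R](q,p)\,\dd q\,\dd p = \fint_{\Gamma^*}(2\pi\hbar)^{-d}\!\!\iint_{\Gamma\times\R^d}\bigl|\langle\underline{q,p}|\psi_k\rangle\bigr|^2\,\dd q\,\dd p\,\dd k = \fint_{\Gamma^*}\|\psi_k\|^2_{L^2_{\rm per}(\Gamma)}\,\dd k.
\]
The right-hand side is $\perTr(R) = 1$ by the definition of $\perTr$ and $R\in\ScEllone$. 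Local integrability of $\husimi[R]$ then follows from nonnegativity, $\Ell$-periodicity in $q$, and the now-established finite total mass on $\Gamma\times\R^d$: on any compact of $\R^d\times\R^d$ the integral is majorized by a finite multiple of the unit cell integral.

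\textbf{Main obstacle.} The delicate step is the resolution of identity: justifying the Fubini exchange and the formal Dirac computation for the $p$-integral (both of which ultimately rely on writing the sesquilinear pairing as a genuine Plancherel identity for the partial Fourier transform of a Schwartz function), and then combining the $\Ell$-sum in $y$ with the finite $q$-integral over $\Gamma$ to recover a full Gaussian integral on $\R^d$. Everything afterwards is a straightforward consequence.
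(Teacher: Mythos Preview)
Your proof is correct and follows essentially the same approach as the paper: both unfold $\langle\percoh{q}{p}|\psi_k\rangle$ into a full-space integral against the non-periodic coherent state, apply Parseval/Plancherel in the $p$-variable to collapse to a Gaussian weight, and then use the $\Ell$-periodicity of $\psi_k$ together with the partition property of $\{\Gamma+\ell\}$ to reduce the remaining integral to $\|\psi_k\|^2_{L^2_{\rm per}(\Gamma)}$. Your packaging of the intermediate step as a resolution-of-identity statement, followed by the translation $p\mapsto p+\hbar k$, is a clean way to organize the same computation.
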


\begin{proof}
The density $\husimi[R]$ it obviously nonnegative and $\Ell$-periodic in the $q$-variable, we thus only have to show that it is a probability density. Fix $q \in \Gamma$ and $p \in \R^d$. By definition,
\[
\husimi[R](q,p) = \fint_{\Gamma^*} f_k(q,p) \, \dd k,
\]
with, for any $k \in \Gamma^*$, 
\[
f_k(q,p) = (\pi \hbar)^{-d/2} \left|(2 \pi \hbar)^{-d / 2}\sum_{\ell \in \Ell}\int_{\Gamma} e^{-\frac{|y+\ell-q|^2}{2 \hbar}} \, e^{-i (p-\hbar k) \cdot (y+\ell) / \hbar} \, \psi_k(y) \, \dd y\right|^2.
\]
As $\psi_k$ is $\Ell$-periodic, the above equals
\[
(\pi \hbar)^{-d/2} \left|(2 \pi \hbar)^{-d / 2}\sum_{\ell \in \Ell}\int_{\Gamma + \ell} e^{-\frac{|y-q|^2}{2 \hbar}} \, e^{-i (p-\hbar k) \cdot y / \hbar} \, \psi_k(y) \, \dd y\right|^2,
\]
that is
\[
(\pi \hbar)^{-d/2} \left|(2 \pi \hbar)^{-d / 2}\int_{\R^d} e^{-\frac{|y-q|^2}{2 \hbar}} \, e^{-i (p-\hbar k) \cdot y / \hbar} \, \psi_k(y) \, \dd y\right|^2.
\]
Integrating in $p \in \R^d$ and using Parseval's equality, we therefore have
\begin{equation} \label{eq:intf_kdp}
\int_{\R^d}f_k(q,p) \, \dd p = (\pi \hbar)^{-d / 2} \, \left\| y \mapsto e^{-\frac{|y-q|^2}{2 \hbar}} \, e^{i k\cdot y} \, \psi_k(y) \right\|_{L^2(\R^d)}^2 = (\pi \hbar)^{-d / 2} \, \int_{\R^d} e^{-\frac{|y-q|^2}{\hbar}} |\psi_k(y)|^2 \, \dd y.  
\end{equation}
Integrating in $q \in \Gamma$, it comes that
\[
\iint_{\Gamma \times \R^d}f_k(q,p) \, \dd q \, \dd p = (\pi \hbar)^{-d / 2} \, \int_{q \in \Gamma} \int_{y \in \R^d} e^{-\frac{|y-q|^2}{\hbar}} |\psi_k(y)|^2 \, \dd y \, \dd q.
\]
Changing variables $y-q \mapsto y$, the above becomes
\[
(\pi \hbar)^{-d / 2} \, \int_{q \in \Gamma} \int_{y \in \R^d} e^{-\frac{|y|^2}{\hbar}} |\psi_k(y+q)|^2 \, \dd y \, \dd q.
\]
Using Fubini's theorem and the fact that $\psi_k$ is $\Ell$-periodic, we henceforth have
\begin{equation} \label{eq:int_fkqp}
\iint_{\Gamma \times \R^d}f_k(q,p) \, \dd q \, \dd p = (\pi \hbar)^{-d / 2} \,  \int_{\R^d} e^{-\frac{|y|^2}{\hbar}} \left(\int_{\Gamma} |\psi_k(q)|^2 \,  \dd q \right)\dd y = \|\psi_k\|_{L^2_{\rm per}(\Gamma)}^2 = \mathrm{Tr}(R_k).
\end{equation}
We then conclude, as
\[
\fint_{\Gamma^*}\iint_{\Gamma \times \R^d}f_k(q,p)\, \dd q \, \dd p \, \dd k = \fint_{\Gamma^*} \mathrm{Tr}(R_k) \dd k = \perTr(R) = 1.
\]
\end{proof}

\subsection{Upper-bounds on pseudo-distance between $\Ell$-periodic Töplitz and Husimi related couples}

In this subsection, we provide an upper-bound on the classical/quantum pseudo-distance between $\Ell$-periodic Töplitz and Husimi related couples. The following proposition focuses on a couple composed of an $\Ell$-periodic Töplitz operator and its symbol.

\begin{proposition} [\textbf{Pseudo-distance between $\Ell$-periodic Töplitz operator and its symbol}] \label{prop:boundToplitz}
For each $f \in \mathcal{P}_\Ell$ with finite second moments, we have
\begin{equation} \label{eqprop:dist_f_Toplitz}
    E_{\hbar,\lambda} (f, \, T_\Ell [f]) \leq \sqrt{\frac{1 + \lambda^2}{2} \, d \, \hbar}.
\end{equation}
\end{proposition}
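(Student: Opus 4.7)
The plan is to exhibit an explicit coupling $Q^\star \in \C(f, T_\Ell[f])$ and to show it already achieves cost at most $(1+\lambda^2)\tfrac{d\hbar}{2}$. Motivated by the formula for $T_\Ell[f]_k$ displayed just below Definition~\ref{def:periodizedToplitz}, I would take $Q^\star(x,\xi)$ to be the operator in $\mathcal{S}_\Ell$ whose Bloch fibers are
\[
Q^\star(x,\xi)_k \;:=\; f(x,\xi)\,|\underline{x,\xi-\hbar k}\rangle\langle\underline{x,\xi-\hbar k}|, \qquad k \in \Gamma^*.
\]
The $\Ell$-periodicity in $x$ is inherited from Lemma~\ref{lem:schroProp}. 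The periodic-trace marginal $\perTr(Q^\star(x,\xi)) = f(x,\xi)$ is immediate from~\eqref{eq:percohNormlized}, and integrating $Q^\star$ in $(x,\xi)\in\Gamma\times\R^d$ fiberwise reproduces exactly $T_\Ell[f]_k$, so $Q^\star \in \C(f, T_\Ell[f])$.

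Next, I would reduce the cost to an expectation on the (non-periodic) coherent state. Using Lemma~\ref{lem:BlochTransFormulas} to identify $(c_\lambda(x,\xi))_k = \hat c_\lambda(x,\xi-\hbar k)$ (the shift encoding $-i\hbar\nabla_y \mapsto -i\hbar\nabla_y + \hbar k$ under the Bloch transform), and exploiting that $Q^\star(x,\xi)_k$ is rank-one, a short calculation of the form $(\alpha|\phi\rangle\langle\phi|)^{1/2}A(\alpha|\phi\rangle\langle\phi|)^{1/2}$ gives
\[
\mathrm{Tr}\!\left(Q^\star(x,\xi)_k^{1/2}\,\hat c_\lambda(x,\xi-\hbar k)\,Q^\star(x,\xi)_k^{1/2}\right) = f(x,\xi)\,\langle\underline{x,\xi-\hbar k}|\hat c_\lambda(x,\xi-\hbar k)|\underline{x,\xi-\hbar k}\rangle.
\]
Averaging in $k \in \Gamma^*$ and applying the isometry (Parseval) identity for the Bloch transform to the operator $A = c_\lambda(x,\xi) \in \mathcal{S}_\Ell$ and the state $\psi = |x,\xi\rangle \in L^2(\R^d)$, whose Bloch fibers are $|\underline{x,\xi-\hbar k}\rangle$ by~\eqref{eq:BlochCoherent}, yields
\[
\perTr\!\left(Q^\star(x,\xi)^{1/2}\,c_\lambda(x,\xi)\,Q^\star(x,\xi)^{1/2}\right) = f(x,\xi)\,\langle x,\xi|\,c_\lambda(x,\xi)\,|x,\xi\rangle_{L^2(\R^d)}.
\]

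The last step is a standard Gaussian moment computation on $L^2(\R^d)$. Since $\xi + i\hbar\nabla_y$ is self-adjoint and satisfies $(\xi + i\hbar\nabla_y)|x,\xi\rangle = -i(y-x)|x,\xi\rangle$, one obtains $\langle x,\xi|(\xi+i\hbar\nabla_y)^2|x,\xi\rangle = \||y-x|\,|x,\xi\rangle\|^2_{L^2(\R^d)} = d\hbar/2$. For the potential contribution, $\Theta(r)\leq r$ together with $|P_\Gamma z|\leq |z|$ (valid because by~\eqref{eqdef:Pgammainterior} $P_\Gamma z$ realizes a closest representative of $z$ modulo $\Ell$) gives
\[
\lambda^2 \int_{\R^d} \Theta\bigl(|P_\Gamma(x-y)|^2\bigr)\,(\pi\hbar)^{-d/2}\,e^{-|y-x|^2/\hbar}\,\dd y \;\leq\; \lambda^2\,\frac{d\hbar}{2}.
\]
Adding the two contributions and integrating against $f$ over $\Gamma \times \R^d$ (total mass $1$) bounds the cost by $(1+\lambda^2)\tfrac{d\hbar}{2}$; taking square roots delivers~\eqref{eqprop:dist_f_Toplitz}. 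The only subtle point I anticipate is the construction of $Q^\star$ as a bona fide measurable $\mathcal{S}_\Ell$-valued map for which $\iint Q^\star\,\dd x\,\dd\xi$ converges strongly to $T_\Ell[f]$ and the cost integral is well-defined before bounding; I expect this to follow from the finite second-moment assumption on $f$ by essentially the same arguments already used to make $T_\Ell[f]$ itself meaningful in $\ScEllone$.
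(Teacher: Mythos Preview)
Your proposal is correct and follows essentially the same route as the paper: the same diagonal coupling $Q^\star(x,\xi)_k=f(x,\xi)\,|\underline{x,\xi-\hbar k}\rangle\langle\underline{x,\xi-\hbar k}|$, the same reduction via Lemma~\ref{lem:BlochTransFormulas} and the Bloch isometry to a non-periodic expectation $\langle x,\xi|c_\lambda(x,\xi)|x,\xi\rangle_{L^2(\R^d)}$, and the same Gaussian moment bounds using $\Theta(r)\leq r$ and $|P_\Gamma z|\leq|z|$. The only cosmetic difference is that you apply the Bloch isometry \emph{before} bounding $\Theta$, whereas the paper bounds first and then identifies the resulting expression as the $k$-fiber of an $L^2(\R^d)$ pairing; both orderings are valid and yield the identical constant $(1+\lambda^2)\tfrac{d\hbar}{2}$.
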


\begin{proof}
We define the diagonal coupling
\[
Q(x,\xi) \, \varphi (y) := f(x,\xi) \,  \, \fint_{\Gamma^*} e^{i k  \cdot y} \langle \underline{x,\xi - \hbar k} | \varphi_k \rangle_{L^2_{\rm per}(\Gamma)} \,  |\underline{x,\xi- \hbar k} \rangle(y)  \, \dd k. 
\]
From~\eqref{eq:BlochCoherent}--\eqref{eq:percohNormlized}, we have $Q \in \C(f,T_\Ell [f])$, and, by construction,
\[
\forall k \in \Gamma^*, \qquad Q_k(x,\xi)  = f(x,\xi) \,  |\underline{x,\xi- \hbar k} \rangle \langle \underline{x,\xi - \hbar k} |.   \phantom{\forall k \in \Gamma^*, \qquad}
\]
Therefore,
\begin{align*}
E_{\hbar,\lambda} (f, \, T_\Ell [f]))^2 &\leq  \iint_{\Gamma \times \R^d} \perTr \left( Q(x,\xi)^\frac12 \, c_\lambda(x,\xi) \, Q(x,\xi)^\frac12 \right) \, \dd x \, \dd \xi    \\
&=  \iint_{\Gamma \times \R^d} \fint_{\Gamma^*}    \mathrm{Tr}\left( Q_k(x,\xi)^\frac12 \, \big(c_\lambda(x,\xi) \big)_k \, Q_k(x,\xi)^\frac12 \right) \, \dd k \,  \dd x \, \dd \xi \\
&= \iint_{\Gamma \times \R^d}f(x,\xi) \fint_{\Gamma^*}    \langle \underline{x,\xi- \hbar k} | \, \hat{c}_\lambda(x,\xi - \hbar k) \, | \underline{x,\xi- \hbar k} \rangle_{L^2_{\rm per}(\Gamma)} \, \dd k \,  \dd x \, \dd \xi,
\end{align*}
where in the last equality we used~\eqref{eq:c_lba_k} in Lemma~\ref{lem:BlochTransFormulas}. Since by~\eqref{eq:equivalenceThetaPGammasqr},
\begin{align*}
   \hat{c}_\lambda (x,\xi) \equiv \lambda^2 \Theta(|P_{\Gamma}(y-x)|^2) + (\xi + i \hbar \nabla_y)^2 \leq \lambda^2 |P_{\Gamma}(y-x)|^2 + (\xi + i \hbar \nabla_y)^2,
\end{align*}
we conclude that
\begin{multline*}
E_{\hbar,\lambda} (f, \, T_\Ell [f]))^2 \\\leq \iint_{\Gamma \times \R^d}f(x,\xi) \fint_{\Gamma^*}  \langle \underline{x,\xi- \hbar k} | \,\lambda^2 |P_{\Gamma}(y-x)|^2 + (\xi + i \hbar \nabla_y)^2   \, | \underline{x,\xi- \hbar k} \rangle_{L^2_{\rm per}(\Gamma)} \, \dd k \,  \dd x \, \dd \xi.
\end{multline*}
On the one hand, for any $(x,\xi) \in \R^d \times \R^d$, the (non-periodic) coherent state $| x,\xi \rangle$ satisfies
\[
(\xi + i \hbar \nabla_y) | x,\xi \rangle (y) = \left(\xi + i \hbar \left[ - \frac{y-x}{\hbar} + \frac{i}{\hbar} \xi \right] \right) |x,\xi \rangle (y ) = - i \left( y-x \right) |x,\xi \rangle (y),
\]
therefore
\[
(\xi + i \hbar \nabla_y)^2 | x,\xi \rangle (y) = \left( d \hbar- |y-x|^2\right) |x,\xi \rangle (y). 
\]
We deduce by linearity that the $\Ell$-periodic coherent state $\percoh{x}{\xi}$ satisfies
\begin{align*}
(\xi + i \hbar \nabla_y)^2 | \underline{x,\xi} \rangle (y) &= \sum_{\ell \in \Ell} (\xi + i \hbar \nabla_y)^2 | x,\xi \rangle (y) = \sum_{\ell \in \Ell} \left( d \hbar- |y + \ell -x|^2\right) |x,\xi \rangle (y + \ell).
\end{align*}
On the other hand,
\[
\lambda^2 |P_{\Gamma}(y-x)|^2 \, \percoh{x}{\xi} (y) = \lambda^2 \sum_{\ell \in \Ell} |P_{\Gamma}(y-x)|^2 \, \coh{x}{\xi} (y + \ell),
\]
so that, as $P_{\Gamma}(y-x) = P_{\Gamma}(y + \ell -x)$,
\begin{multline*}
(\,\lambda^2 |P_{\Gamma}(y-x)|^2 + (\xi + i \hbar \nabla_y)^2  ) \, \percoh{x}{\xi} (y) \\
= \sum_{\ell \in \Ell} \left(d \hbar  + \lambda^2 |P_{\Gamma}(y + \ell -x)|^2 - |y + \ell -x|^2 \right)\, \coh{x}{\xi} (y + \ell).
\end{multline*}
It comes in particular that, for any $k \in \Gamma^*$,
\[
(\lambda^2 |P_{\Gamma}(y-x)|^2 + (\xi + i \hbar \nabla_y)^2  )\, \percoh{x}{\xi - \hbar k} = \left(y \mapsto (d \hbar  + \lambda^2 |P_{\Gamma}(y -x)|^2 - |y  -x|^2 )\, \coh{x}{\xi}(y) \right)_k,
\]
where the notation $(\bullet)_k$ stands for the $k^{th}$ component of the Bloch decomposition of the function $\bullet$. Since $\percoh{x}{\xi - \hbar k} = (\coh{x}{\xi})_k$ by~\eqref{eq:BlochCoherent}, we have, using the isometry property of the Bloch transform~\eqref{eq:BlochNormsEq},
\begin{multline*}
\fint_{\Gamma^*}  \langle \underline{x,\xi- \hbar k} | \,\lambda^2 |P_{\Gamma}(y-x)|^2 + (\xi + i \hbar \nabla_y)^2   \, | \underline{x,\xi- \hbar k} \rangle_{L^2_{\rm per}(\Gamma)} \, \dd k \\
= \langle x,\xi |  (d \hbar  + \lambda^2 |P_{\Gamma}(y -x)|^2 - |y  -x|^2 )\, \coh{x}{\xi}_{L^2(\R^d)},
\end{multline*}
which, as $|P_{\Gamma}(y -x)| \leq |y - x|$, is less than
\[
d \hbar + \langle x,\xi |  ( \lambda^2 - 1)  |y  -x|^2 \, \coh{x}{\xi}_{L^2(\R^d)} = d \hbar + (\lambda^2 - 1) \frac{d \hbar}{2} = (1 + \lambda^2)\frac{d \hbar}{2},
\]
hence allowing to conclude to~\eqref{eqprop:dist_f_Toplitz}.
\end{proof}

\medskip

\noindent In the following proposition, we provide an upper-bound on the classical/quantum pseudo-distance between an operator in $\ScEllone$ for which all Bloch components are rank-$1$ projectors (``periodic pure state'') and its associated Husimi density.

\begin{proposition} [\textbf{Pseudo-distance between ``periodic pure state'' and its Husimi density}] \label{prop:EhbarHusimi}
    Consider $R \in \ScEllone$ whose Bloch transform writes, for any $k \in \Gamma^*$,
\[
R_k = |\psi_k \rangle \langle \psi_k |,
\]
with $\psi_k \in H^2_{\rm per}(\Gamma)$. Then
    \begin{equation} \label{eq:EhbarHusimi}
        E_{\hbar, 1}(\husimi[R],R) \leq \sqrt{d \hbar \, \mathbf{c}_{\psi} + 2 \,\Delta_{\Gamma,\hbar}^2(R)},
    \end{equation}
where $\mathbf{c}_\bullet$ and $\Delta_{\Gamma,\hbar}$ are defined respectively  in~\eqref{eqdef:cBold} and~\eqref{eqdef:stdDevR}.
\end{proposition}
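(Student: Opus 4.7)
The plan is to construct an explicit coupling $Q\in\C(\husimi[R],R)$ adapted to the periodic pure-state Bloch structure, and then bound the resulting transportation cost term by term. Mirroring the strategy of Proposition~\ref{prop:boundToplitz}, I would set, for each $k\in\Gamma^*$,
\[
Q_k(q,p) \,:=\, (2\pi\hbar)^{-d}\,R_k^{1/2}\,|\underline{q,p-\hbar k}\rangle\langle\underline{q,p-\hbar k}|\,R_k^{1/2},
\]
which, since $R_k=|\psi_k\rangle\langle\psi_k|$ is rank one, collapses to $f_k(q,p)\,|\psi_k\rangle\langle\psi_k|/\|\psi_k\|^2_{L^2_{\rm per}(\Gamma)}$ with $f_k$ as in~\eqref{eqdef:husimi2}. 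The first marginal $\fint_{\Gamma^*}\mathrm{Tr}(Q_k)\,\dd k=\husimi[R]$ is immediate from Definition~\ref{def:periodizedHusimi}; the second marginal $\iint_{\Gamma\times\R^d}Q_k\,\dd q\,\dd p=R_k$ reduces to the resolution of identity
\[
(2\pi\hbar)^{-d}\iint_{\Gamma\times\R^d}|\underline{q,p-\hbar k}\rangle\langle\underline{q,p-\hbar k}|\,\dd q\,\dd p \,=\, I_{L^2_{\rm per}(\Gamma)},
\]
which I would establish by the same Parseval-plus-periodization computation used in the proof of Proposition~\ref{prop:HusimiIsAProba}.

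Using the Bloch decomposition and the rank-one structure, the cost rewrites as
\[
E_{\hbar,1}(\husimi[R],R)^2 \,\leq\, \fint_{\Gamma^*}\frac{1}{\|\psi_k\|^2}\iint_{\Gamma\times\R^d}f_k(q,p)\,\langle\psi_k|\hat{c}_1(q,p-\hbar k)|\psi_k\rangle\,\dd q\,\dd p\,\dd k.
\]
After the change of variable $p\mapsto p+\hbar k$ and the upper bound $\Theta(r)\leq r$ from~\eqref{eq:equivalenceThetaPGammasqr}, the integrand splits into a position piece $\int_\Gamma|P_\Gamma(q-y)|^2|\psi_k(y)|^2\,\dd y$ and a kinetic piece $\int_\Gamma|(p+i\hbar\nabla_y)\psi_k(y)|^2\,\dd y$. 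For the kinetic piece I would integrate in $p$ first using the Gaussian–Parseval identity $\int_{\R^d}(2\pi\hbar)^{-d}|\langle q,p|\psi_k\rangle|^2\,\dd p=(\pi\hbar)^{-d/2}\int_{\R^d}e^{-|y-q|^2/\hbar}|\psi_k(y)|^2\,\dd y$, together with the analogous first- and second-moment-in-$p$ formulas (which produce the probability current $\mathrm{Im}(\overline{\psi_k}\nabla\psi_k)$ and $\hbar^2|\nabla\psi_k|^2$, respectively); the periodization identity $\int_\Gamma\sum_{\ell\in\Ell}e^{-|y+\ell-q|^2/\hbar}\,\dd q=(\pi\hbar)^{d/2}$ then folds the $\R^d$ integrals back to $\Gamma$ and produces exactly the kinetic contribution $\hbar^2\|\nabla\psi_k\|^2\|\psi_k\|^2-|P_k|^2$ (with $P_k:=\langle\psi_k|{-}i\hbar\nabla|\psi_k\rangle$) to $\Delta_{\Gamma,\hbar}^2(R)$. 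For the position piece, the elementary bound $|P_\Gamma(q-y)|^2\leq 2|q-z|^2+2|P_\Gamma(z-y)|^2$ (with $z$ the internal Gaussian integration variable) separates a second-moment term of size $\tfrac{d\hbar}{2}\|\psi_k\|^2$ from a spread term $\iint_{\Gamma\times\Gamma}|P_\Gamma(y-z)|^2|\psi_k(y)|^2|\psi_k(z)|^2\,\dd y\,\dd z$ matching the $|P_\Gamma|^2$-part of $2\Delta_{\Gamma,\hbar}^2(R)$.

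The main technical obstacle will be handling the unavoidable $1/\|\psi_k\|^2$ factor inherited from the coupling. Kinetic contributions come paired with natural $\|\psi_k\|^2$ weights that absorb it, but the $d\hbar$-size Gaussian-moment piece produces $d\hbar\fint\|\psi_k\|^2\,\dd k=d\hbar$, which must then be dominated by $d\hbar\,\mathbf{c}_\psi=d\hbar\fint\|\psi_k\|^4\,\dd k\geq d\hbar$ via Jensen's inequality (using $\perTr(R)=1$). Arranging the Peter–Paul splittings so that every piece lands under the correct component of $d\hbar\,\mathbf{c}_\psi+2\Delta_{\Gamma,\hbar}^2(R^{\rm in})$, while also exploiting the $-|P_k|^2$ cancellation to match the cross-term in the kinetic part of $\Delta_{\Gamma,\hbar}^2$, is the bulk of the technical work.
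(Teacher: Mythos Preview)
Your coupling differs from the paper's by a crucial normalization. The paper simply takes $Q_k(q,p)=f_k(q,p)\,R_k$, with \emph{no} division by $\|\psi_k\|^2$. With that choice one computes $\E_k:=\iint f_k\,\langle\psi_k|\hat c_1(q,p-\hbar k)|\psi_k\rangle\,\dd q\,\dd p$ along exactly the lines you outline and obtains
\[
\E_k \le d\hbar\,\|\psi_k\|_{L^2_{\rm per}}^4 \;+\; \iint_{\Gamma\times\Gamma}\Big[|P_\Gamma(y-q)|^2+(i\hbar\nabla_y-i\hbar\nabla_q)^2\Big]\,|\psi_k(y)|^2|\psi_k(q)|^2\,\dd y\,\dd q,
\]
so that averaging in $k$ gives precisely $d\hbar\,\mathbf c_\psi+2\Delta_{\Gamma,\hbar}^2(R)$ --- no Jensen, no leftover weights, no Peter--Paul. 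The factor $\mathbf c_\psi$ is not produced by any Jensen step; it is exactly what the Gaussian second moment yields once the $f_k$-density (which itself carries a hidden $\|\psi_k\|^2$ from $\iint f_k=\|\psi_k\|^2$) is integrated out. The paper also uses the sharper estimate $|P_\Gamma(y+x-q)|^2\le |x|^2+|P_\Gamma(y-q)|^2+2x\cdot P_\Gamma(y-q)$, whose cross term vanishes by oddness of the Gaussian, instead of your cruder factor-of-$2$ bound.

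The gap in your plan is precisely the $1/\|\psi_k\|^2$ you flag: it does \emph{not} get absorbed the way you claim. After division, the position spread becomes $\|\psi_k\|^{-2}\iint|P_\Gamma(y-q)|^2|\psi_k(q)|^2|\psi_k(y)|^2$, and the kinetic piece becomes $\tfrac{d\hbar}{2}\|\psi_k\|^2+2\|{-}i\hbar\nabla\psi_k\|^2-2\|\psi_k\|^{-2}|P_k|^2$; neither matches the corresponding contribution to $2\Delta_{\Gamma,\hbar}^2(R)$ (which carries no $1/\|\psi_k\|^2$ on the spread term and a factor $\|\psi_k\|^2$, not $1$, on the gradient term) when $\|\psi_k\|$ genuinely varies with $k$. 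Your normalized coupling is indeed the one that verifies both marginals cleanly, but the term-by-term matching you propose with the stated right-hand side does not close; the paper's route is to use the unnormalized $Q_k=f_k R_k$ so that every piece lands exactly on $d\hbar\|\psi_k\|^4$ or on the integrand defining $\Delta_{\Gamma,\hbar}^2$.
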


\begin{proof}
Recall that for any $(q,p) \in \R^d \times \R^d$,
\[
\husimi[R](q,p) := \fint_{\Gamma^*} f_k(q,p) \, \dd k,
\]
where
\[
 f_k(q,p) := (2 \pi \hbar)^{-d} \, \left|\langle\underline{q,p-\hbar k} | \psi_k \rangle_{L^2_{\rm per}(\Gamma)} \right|^2.
\]
We let $Q$ such that for any $k \in \Gamma^*$, the $k^{th}$ component of $Q$'s Bloch transform, $Q_k$, is defined by
\[
Q_k(q,p) := f_k(q,p) \, R_k.
\]
We let
\[
\mathcal{E}_k := \iint_{\Gamma \times \R^d} \mathrm{Tr} \left(Q_k(q,p)^{\frac12} \hat{c}_1(q,p - \hbar k)  Q_k(q,p)^{\frac12} \right) \, \dd q \, \dd p,
\]
which can be rewritten as
\[
\mathcal{E}_k = \iint_{\Gamma \times \R^d} f_k(q,p) \, \langle \psi_k | \hat{c}_1(q,p - \hbar k)  | \psi_k \rangle_{L^2_{\rm per}(\Gamma)}\, \dd q \, \dd p,
\]
or, changing variables $p - \hbar k \mapsto p$,
\[
\mathcal{E}_k = \iint_{\Gamma \times \R^d} f_k(q,p + \hbar k) \, \langle \psi_k | \hat{c}_1(q,p)  | \psi_k \rangle_{L^2_{\rm per}(\Gamma)} \, \dd q \, \dd p.
\]
Recall that
\[
\hat{c}_1 (q,p) = \Theta(|P_{\Gamma}(y-q)|^2) + (p + i \hbar \nabla_y)^2 \leq |P_{\Gamma}(y-q)|^2 + (p + i \hbar \nabla_y)^2,
\]
we denote
\[
\E_k = \E_{k, 1} + \E_{k, \, 2},
\]
with
\[
\E_{k, 1} := \iint_{\Gamma \times \R^d} f_k(q,p + \hbar k) \left(\int_{\Gamma} |P_\Gamma(y-q)|^2 \, |\psi_k(y)|^2 \, \dd y \right) \dd q \, \dd p
\]
and
\[
\E_{k, 2} := \iint_{\Gamma \times \R^d} f_k(q,p + \hbar k) \left(\int_{\Gamma} \overline{\psi_k(y)} \, (p + i \hbar \nabla_y)^2 \,\psi_k(y) \,  \dd y \right) \dd q \, \dd p.
\]
$\bullet$ \textbf{Upper bound for} $\E_{k,1}$.

\medskip

\noindent By Fubini's theorem, we have
\begin{align*}
 \E_{k, 1} = \iint_{\Gamma \times \Gamma}  \left(\int_{\R^d} f_k(q,p)\, \dd p \right) |P_\Gamma(y-q)|^2 \, |\psi_k(y)|^2 \, \dd y \, \dd q.
\end{align*}
Recalling Equation~\eqref{eq:intf_kdp}, the above writes
\[
\E_{k, 1} = \iint_{\Gamma \times \Gamma}  \left(  (\pi \hbar)^{-d / 2} \, \int_{\R^d} e^{-\frac{|z-q|^2}{\hbar}} |\psi_k(z)|^2 \, \dd z \right) |P_\Gamma(y-q)|^2 \, |\psi_k(y)|^2 \, \dd y \, \dd q.
\]
We change variables $z \mapsto x = z-q$ and obtain
\[
\E_{k, 1} = \iint_{\Gamma \times \Gamma}  \left(  (\pi \hbar)^{-d / 2} \, \int_{\R^d} e^{-\frac{|x|^2}{\hbar}} |\psi_k(x+q)|^2 \, \dd x \right) |P_\Gamma(y-q)|^2 \, |\psi_k(y)|^2 \, \dd y \, \dd q.
\]
By Fubini's theorem,
\[
\E_{k, 1} = (\pi \hbar)^{-d / 2} \, \int_{x \in \R^d}\int_{y \in \Gamma}   e^{-\frac{|x|^2}{\hbar}} \left(\int_{\Gamma}|\psi_k(x+q)|^2 |P_\Gamma(y-q)|^2  \, \dd q\right) |\psi_k(y)|^2 \, \dd x \, \dd y.
\]
Since the function $q \mapsto |\psi_k(x+q)|^2 |P_\Gamma(y-q)|^2$ is $\Ell$-periodic, we have
\[
\int_{\Gamma}|\psi_k(x+q)|^2 |P_\Gamma(y-q)|^2  \, \dd q = \int_{\Gamma}|\psi_k(q)|^2 |P_\Gamma(y+x-q)|^2  \, \dd q,
\]
so that
\[
\E_{k, 1} = (\pi \hbar)^{-d / 2} \, \int_{x \in \R^d}\int_{y \in \Gamma}   e^{-\frac{|x|^2}{\hbar}} \left(\int_{\Gamma}|\psi_k(q)|^2 |P_\Gamma(y+x-q)|^2  \, \dd q\right) |\psi_k(y)|^2 \, \dd x \, \dd y,
\]
hence
\[
\E_{k, 1} =  \iint_{\Gamma \times \Gamma}  \left((\pi \hbar)^{-d / 2} \,\int_{\R^d} |P_\Gamma(y+x-q)|^2  \,  e^{-\frac{|x|^2}{\hbar}} \dd x\right) |\psi_k(q)|^2 |\psi_k(y)|^2 \, \dd y \, \dd q,
\]
Now, denoting $\ell_{y-q} = y-q -  P_\Gamma(y-q) \in \Ell$, notice that
\begin{multline*}
|P_\Gamma(y+x-q)|^2  = \min_{\ell \in \Ell}|y + x - q - \ell|^2  \leq |x+ y-q - \ell_{y-q}|^2 =  |x+P_\Gamma(y-q)|^2 \\
= |x|^2 + |P_\Gamma(y-q)|^2 + 2 \, x \cdot P_\Gamma(y-q).
\end{multline*}
Since
\[
(\pi \hbar)^{-d/2} \,\int_{\R^d} |x|^2  \,  e^{-\frac{|x|^2}{\hbar}} \dd x = \frac{d \hbar}{2}, \quad \text{and} \quad (\pi \hbar)^{-d/2} \,\int_{\R^d} x  \,  e^{-\frac{|x|^2}{\hbar}} \dd x = 0,
\]
we henceforth obtain
\begin{align*}
\E_{k,1} &\leq \iint_{\Gamma \times \Gamma}  \left(\frac{d \hbar}{2}\right) |\psi_k(q)|^2 |\psi_k(y)|^2 \, \dd y \, \dd q +  \iint_{\Gamma \times \Gamma}  |P_\Gamma(y-q)|^2 \, |\psi_k(q)|^2 |\psi_k(y)|^2 \, \dd y \, \dd q \\
&= \frac{d \hbar}{2} \, \|\psi_k\|^4_{L^2_{\rm per}(\Gamma)} + \iint_{\Gamma \times \Gamma}  |P_\Gamma(y-q)|^2 \, |\psi_k(q)|^2 |\psi_k(y)|^2 \, \dd y \, \dd q.
\end{align*}
$\bullet$ \textbf{Upper bound for} $\E_{k,2}$. Let us decompose
\[
(p + i\hbar \nabla_y)^2 = |p|^2 - 2  p  \cdot  (-i\hbar\nabla_y) + (-i\hbar \nabla_y)^2,
\]
we have, recalling~\eqref{eq:int_fkqp},
\begin{multline*}
    \E_{k,2} = \|\psi_k\|^2_{L^2_{\rm per}(\Gamma)}\iint_{\Gamma \times \R^d} |p|^2 \, f_k(q,p + \hbar k) \, \dd q \, \dd p +  \hbar^2 \; \|\psi_k\|^2_{L^2_{\rm per}(\Gamma)}\int_{\Gamma}  |\nabla \psi_k(y)|^2 \,  \dd y  \\
    - 2 \iint_{\Gamma \times \R^d} p \, f_k(q,p + \hbar k) \, \dd q \, \dd p  \cdot \int_{\Gamma} \overline{\psi_k(y)} \, (-i \hbar \nabla_y) \,\psi_k(y) \,  \dd y.
\end{multline*}
Denote by $\hat{g}$ the the Fourier transform of
\[
g : y \mapsto e^{-\frac{|y-q|^2}{2 \hbar}} \, \psi_k(y),
\]
we have
\[
f_k(q,p + \hbar k) = (\pi \hbar)^{-d/2} \left|  \hbar^{-d/2} \,\hat{g} \left( \frac{p}{\hbar}\right) \right|^2.
\]
On the one hand,
\[
\int_{\R^d} |p|^2 \, f_k(q,p + \hbar k) \, \dd p = (\pi \hbar)^{-d/2} \int_{\R^d} \left| i p \,  \hbar^{-d/2} \,\hat{g} \left( \frac{p}{\hbar}\right) \right|^2 \, \dd p = (\pi \hbar)^{-d/2} \, \hbar^2 \int_{\R^d} \left| i p  \,\hat{g} \left(p\right) \right|^2 \, \dd p.
\]
By Parseval's equality, the above transforms into
\begin{align*}
\hbar^2 \left( (\pi \hbar)^{-d/2} \int_{\R^d} \left| \nabla g(y)  \right|^2 \, \dd y\right)&= \hbar^2 \; (\pi \hbar)^{-d/2} \int_{\R^d} \left| \nabla \psi_k(y)  - \frac{y-q}{\hbar}\, \psi_k(y)\right|^2 \, e^{-\frac{|y-q|^2}{\hbar}}\, \dd y \\
&= \hbar^2 \;(\pi \hbar)^{-d/2} \int_{\R^d} \left| \nabla \psi_k(y+q)  - \frac{y}{\hbar}\, \psi_k(y+q)\right|^2 \, e^{-\frac{|y|^2}{\hbar}}\, \dd y.
\end{align*}
Integrating in $q \in \Gamma$, we obtain
\begin{align*}
\iint_{\Gamma \times \R^d} |p|^2 \, f_k(q,p + \hbar k) \, \dd q \, \dd p &= \hbar^2  (\pi \hbar)^{-d/2} \int_{\R^d}\left( \int_{\Gamma}\left| \nabla \psi_k(y+q) \right|^2 \, \dd q \right) e^{-\frac{|y|^2}{\hbar}}\, \dd y \\
&+ \hbar^2  (\pi \hbar)^{-d/2} \int_{\R^d}\left( \int_{\Gamma} |\psi_k(y+q)|^2 \dd q \right) \left|\frac{y}{\hbar} \right|^2 e^{-\frac{|y|^2}{\hbar}}\, \dd y   \\
&- 2\hbar^2  (\pi \hbar)^{-d/2} \int_{\R^d}\, \mathfrak{Re}\left( \int_{\Gamma} \psi_k(y+q)\nabla \psi_k(y+q)  \, \dd q \right) \cdot \frac{y}{\hbar} \, e^{-\frac{|y|^2}{\hbar}}\, \dd y.
\end{align*}
Since $\psi_k \in H^1_{\rm per} (\Gamma)$, we have
\[
 \int_{\Gamma}\left| \nabla \psi_k(y+q) \right|^2 \, \dd q  =  \int_{\Gamma}\left| \nabla \psi_k(q) \right|^2 \, \dd q = \|\nabla\psi_k\|^2_{L^2_{\rm per}(\Gamma)},
\]
as well as
\[
\int_{\Gamma} |\psi_k(y+q)|^2 \dd q = \int_{\Gamma} |\psi_k(q)|^2 \dd q = \|\psi_k\|^2_{L^2_{\rm per}(\Gamma)},
\]
and
\[
 \int_{\Gamma} \psi_k(y+q)\nabla \psi_k(y+q)  \, \dd q  =  \int_{\Gamma} \psi_k(q) \nabla \psi_k(q)  \, \dd q.
\]
Combining these with
\[
(\pi \hbar)^{-d/2} \int_{\R^d} e^{-\frac{|y|^2}{\hbar}}\, \dd y = 1, \quad (\pi \hbar)^{-d/2} \int_{\R^d} y \, e^{-\frac{|y|^2}{\hbar}}\, \dd y = 0 \; \; \text{and} \; \;  (\pi \hbar)^{-d/2} \int_{\R^d} |y|^2 \, e^{-\frac{|y|^2}{\hbar}}\, \dd y = \frac{d \hbar}{2},
\]
we conclude that
\[
\iint_{\Gamma \times \R^d} |p|^2 \, f_k(q,p + \hbar k) \, \dd q \, \dd p = \hbar^2 \|\nabla\psi_k\|^2_{L^2_{\rm per}(\Gamma)} + \frac{d \hbar}{2} \|\psi_k\|^2_{L^2_{\rm per}(\Gamma)}.
\]
On the other hand, 
\begin{multline*}
 \int_{\R^d} p \, f_k(q,p + \hbar k) \, \dd p = (\pi \hbar)^{-d/2} \int_{\R^d} p \left|  \hbar^{-d/2} \,\hat{g} \left( \frac{p}{\hbar}\right) \right|^2 \, \dd p = (\pi \hbar)^{-d/2} \, \int_{\R^d} p\left|\hat{g} \left(p\right) \right|^2 \, \dd p \\
 = -i\hbar \,  (\pi \hbar)^{-d/2} \int_{\R^d} \overline{\hat{g} (p) } \, \left(ip \, \hat{g} (p) \right)\, \dd p.
\end{multline*}
The isometry property of the Fourier transform yields
\begin{align*}
\langle \hat{g} \, | \widehat{\nabla g}\, \rangle_{L^2(\R^d)}  = \langle g \, | \nabla g\, \rangle_{L^2(\R^d)},
\end{align*}
so that we have
\begin{align*}
\int_{\R^d} p \, f_k(q,p + \hbar k) \, \dd p &=  -i\hbar \,  (\pi \hbar)^{-d/2} \int_{\R^d} \overline{\psi_k(y)} \left( \nabla \psi_k(y)  - \frac{y-q}{\hbar} \, \psi_k(y)\right) e^{-\frac{|y-q|^2}{\hbar}}  \, \dd y \\
&= -i\hbar \,  (\pi \hbar)^{-d/2} \int_{\R^d} \overline{\psi_k(y+q)} \left( \nabla \psi_k(y+q)  - \frac{y}{\hbar} \, \psi_k(y+q)\right) e^{-\frac{|y|^2}{\hbar}}  \, \dd y.
\end{align*}
Again, integrating in $q \in \Gamma$ and by $\Ell$-periodicity of $\psi_k$, we obtain
\begin{multline*}
\iint_{\Gamma \times\R^d} p \, f_k(q,p + \hbar k) \, \dd q \,  \dd p
= -i\hbar \,  (\pi \hbar)^{-d/2} \int_{\R^d} \left(\int_{\Gamma}\overline{\psi_k(q)} \nabla \psi_k(q) \, \dd q \right) e^{-\frac{|y|^2}{\hbar}}  \, \dd y\\
+ i\hbar \,  (\pi \hbar)^{-d/2} \left(\int_{\R^d} |\psi_k(q)|^2 \, \dd q\right) \frac{y}{\hbar} e^{-\frac{|y|^2}{\hbar}}  \, \dd y,
\end{multline*}
that is
\[
\langle \psi_k | \, -i\hbar \nabla \psi_k \rangle_{L^2_{\rm per}(\Gamma)}.
\]
All in all,
\begin{align*}
\E_{k,2} &= \frac{d \hbar}{2} \|\psi_k\|^4_{L^2_{\rm per}(\Gamma)} + 2  \|\psi_k\|^2_{L^2_{\rm per}(\Gamma)} \|- i \hbar\nabla \psi_k\|^2_{L^2_{\rm per}(\Gamma)}  - 2 \langle \psi_k | \, -i\hbar \nabla \psi_k \rangle_{L^2_{\rm per}(\Gamma)}^2 \\
&= \frac{d \hbar}{2} \|\psi_k\|^4_{L^2_{\rm per}(\Gamma)}  + \iint_{\Gamma \times \Gamma} (-i \hbar \nabla_y + i \hbar \nabla_q)^2 \, |\psi_k(y)|^2 \, |\psi_k(q)|^2 \, \dd y \, \dd q.
\end{align*}
$\bullet$ \textbf{Conclusion to the proof.}

\medskip

\noindent Combining the upper bounds on $\E_{k,1}$ and $\E_{k,2}$, we get
\[
\E_k = \E_{k,1} + \E_{k,2} \leq d \hbar \, \|\psi_k\|^4_{L^2_{\rm per}(\Gamma)} +\iint_{\Gamma \times \Gamma} \left[ |P_\Gamma(y-q)|^2 + (i \hbar \nabla_y - i \hbar \nabla_q)^2 \right] \, |\psi_k(y)|^2 \, |\psi_k(q)|^2 \, \dd y \, \dd q.
\]
Taking the average in $k \in \Gamma^*$, we conclude to the proposition, as
\[
E_{\hbar, 1}(\husimi[R], \, R) \leq \fint_{\Gamma^*} \E_k \, \dd k = d \hbar \, \mathbf{c}_{\psi} + 2\Delta^2_{\Gamma,\hbar} (R).
\]
\end{proof}

\section{Proof of Theorem~\ref{theo:obsfinal}}

In this final section, we provide the proof of our core observability Theorem~\ref{theo:obsfinal}, which relies mainly on the Liouville-von Neumann stability result (Theorem~\ref{theo:stability}), bounds related to the Töplitz and Husimi transforms (Propositions~\ref{prop:boundToplitz} and~\ref{prop:EhbarHusimi}) and the classical observability result (Lemma~\ref{lem:liouvilleobs}).

\begin{proof}[Proof of Theorem~\ref{theo:obsfinal}] \label{sec:proof}
Using the notations of Theorem~\ref{theo:obsfinal}, we can write
\begin{equation*}
R(t) = U(t)^* \, R^{\rm in} \, U(t),
\end{equation*}
and set, for some arbitrary $f^{\rm in} \in \mathcal{P}_\Ell$,
\[
f(t, X, \Xi):=f^{\rm in} \left( \Phi_{-t}(X, \Xi) \right) \quad \text{for a.e. } (X,\Xi) \in \R^d \times \R^d,
\]
where we recall that $U(t)$ and $\Phi_t$ are respectively the quantum and classical flows (see Subsection~\ref{subsec:flowsBlochflows}) associated to the potential $V$ (and $\hbar$). 

\medskip

\noindent Let $0 \leq \chi \in \mathcal{C}^0(\R^d)$ a Lipschitz and $\Ell$-periodic function. We have, for all $Q \equiv Q(t,x,\xi) \in \C(f(t),R(t))$,
\begin{equation*}
    \perTr(\chi R(t)) - \iint_{\Gamma \times \R^d} \chi (x) f(t,x,\xi) \, \dd x \, \dd \xi = \iint_{\Gamma \times \R^d} \perTr \left((\chi (y) - \chi (x)) Q(t,x,\xi) \right) \dd x \, \dd \xi,
\end{equation*}
as
\[
\perTr(\chi R(t)) = \perTr \left(\chi(y) \iint_{\Gamma \times \R^d} Q(t,x,\xi) \, \dd x \, \dd \xi \right) =  \iint_{\Gamma \times \R^d} \perTr\left(\chi(y)  Q(t,x,\xi)  \right) \dd x \, \dd \xi,
\]
and
\[
\iint_{\Gamma \times \R^d} \chi (x) f(t,x,\xi) \, \dd x \, \dd \xi = \iint_{\Gamma \times \R^d} \chi (x) \perTr\left( Q(t,x,\xi)  \right) \, \dd x \, \dd \xi =  \iint_{\Gamma \times \R^d} \perTr\left(\chi(x)  Q(t,x,\xi)  \right) \dd x \, \dd \xi.
\]
It comes that
\begin{align*}
    \left| \perTr(\chi R(t)) - \iint_{\Gamma \times \R^d} \chi (x) f(t,x,\xi) \, \dd x \, \dd \xi \right| &\leq \iint_{\Gamma \times \R^d} \left| \perTr\left( (\chi (y) - \chi (x)) \, Q(t,x,\xi)  \right)  \right| \dd x \, \dd \xi \\
    &= \iint_{\Gamma \times \R^d} \left|  \fint_{\Gamma^*} \mathrm{Tr} \left( (\chi (y) - \chi (x)) \, Q(t,x,\xi)_k  \right) \dd k \right| \dd x \, \dd \xi
\end{align*}
where the last equality comes by remarking that the multiplication by an $\Ell$-periodic function commutes with the Bloch transform. The above term is therefore smaller than
\begin{align*}
 &\phantom{\leq} \fint_{\Gamma^*}\iint_{\Gamma \times \R^d} \left| \mathrm{Tr} \left(  Q(t,x,\xi)^{\frac12}_k  \, (\chi (y) - \chi (x)) \, Q(t,x,\xi)^{\frac12}_k  \right)  \right| \dd k \, \dd x \, \dd \xi \\
  &\leq \fint_{\Gamma^*} \iint_{\Gamma \times \R^d} \mathrm{Tr} \left(  Q(t,x,\xi)^{\frac12}_k  \, |\chi (y) - \chi (x)| \, Q(t,x,\xi)^{\frac12}_k  \right)  \dd k \, \dd x \, \dd \xi \\
    &\leq \mathrm{Lip}(\chi) \fint_{\Gamma^*} \iint_{\Gamma \times \R^d} \mathrm{Tr} \left(  Q(t,x,\xi)^{\frac12}_k  \, |P_{\Gamma}(x-y)| \, Q(t,x,\xi)^{\frac12}_k  \right)  \dd k \, \dd x \, \dd \xi,
\end{align*}
the last inequality coming from the fact that $\chi$ is $\Ell$-periodic. As $z \leq \frac12 \left(\varepsilon z^2 + \e^{-1} \right)$ for any $z, \varepsilon > 0$, the above integral is less than
\begin{multline*}
\frac{\e}{2} \fint_{\Gamma^*}\iint_{\Gamma \times \R^d} \mathrm{Tr} \left(  Q(t,x,\xi)^{\frac12}_k  \, |P_{\Gamma}(x-y)|^2 \, Q(t,x,\xi)^{\frac12}_k  \right)  \dd k \, \dd x \, \dd \xi \\
+ \frac{\e^{-1}}{2}  \fint_{\Gamma^*}\iint_{\Gamma \times \R^d} \mathrm{Tr} \left(  Q(t,x,\xi)^{\frac12}_k  Q(t,x,\xi)^{\frac12}_k  \right)  \dd k \, \dd x \, \dd \xi,
\end{multline*}
that is exactly
\begin{equation} \label{eqproof:epsilon}
\frac{\e}{2} \fint_{\Gamma^*}\iint_{\Gamma \times \R^d} \mathrm{Tr} \left(  Q(t,x,\xi)^{\frac12}_k  \, |P_{\Gamma}(x-y)|^2 \, Q(t,x,\xi)^{\frac12}_k  \right)  \dd k \, \dd x \, \dd \xi + \frac{\e^{-1}}{2},
\end{equation}
as
\[
\fint_{\Gamma^*}\iint_{\Gamma \times \R^d} \mathrm{Tr} \left(  Q(t,x,\xi)_k  \right)  \dd k \, \dd x \, \dd \xi =  \fint_{\Gamma^*}\mathrm{Tr} \left[\left(  \iint_{\Gamma \times \R^d}  Q(t,x,\xi) \, \dd x \, \dd \xi  \right)_k \right] \dd k = \perTr(R(t)) = 1.
\]
Minimizing~\eqref{eqproof:epsilon} in $\e$, we conclude that
\begin{multline*}
    \left| \perTr(\chi R(t)) - \iint_{\Gamma \times \R^d} \chi (x) f(t,x,\xi) \, \dd x \, \dd \xi \right| \\
    = \mathrm{Lip}(\chi) \left(\fint_{\Gamma^*}\iint_{\Gamma \times \R^d} \mathrm{Tr} \left(  Q(t,x,\xi)^{\frac12}_k  \, |P_{\Gamma}(x-y)|^2 \, Q(t,x,\xi)^{\frac12}_k  \right)  \dd k \, \dd x \, \dd \xi \right)^{\frac12},
\end{multline*}
which, as $|P_{\Gamma}(x-y)|^2 \leq \frac{2 \gamma_+}{\gamma_-} \Theta(|P_\Gamma(x-y)|^2)$ and $0 \leq (\xi - \hbar k + i \hbar \nabla_y)^2$, is itself less than
\[
\sqrt{\frac{2 \gamma_+}{\gamma_-}}\frac{\mathrm{Lip}(\chi)}{\lambda} \left(\fint_{\Gamma^*}\iint_{\Gamma \times \R^d} \mathrm{Tr} \left(  Q(t,x,\xi)^{\frac12}_k  \, \hat{c}_\lambda(x,\xi - \hbar k) \, Q(t,x,\xi)^{\frac12}_k  \right)  \dd k \, \dd x \, \dd \xi \right)^{\frac12}.
\]
Recalling~\eqref{eq:c_lba_k} in Lemma~\ref{lem:BlochTransFormulas} as well as Corollary~\ref{cor:fundamentalPropertyBlochOp} in Appendix~\ref{sec:Bloch}, we obtain that the $k^{th}$ term of the Bloch transform of $Q(t,x,\xi)^{\frac12}  \, c_{\lambda}(x,\xi) \, Q(t,x,\xi)^{\frac12}$ exactly writes $Q(t,x,\xi)^{\frac12}_k  \, \hat{c}_\lambda(x,\xi - \hbar k) \, Q(t,x,\xi)^{\frac12}_k$. Hence the above terms equals
\[
\sqrt{\frac{2 \gamma_+}{\gamma_-}}\frac{\mathrm{Lip}(\chi)}{\lambda} \left(\iint_{\Gamma \times \R^d} \perTr \left(  Q(t,x,\xi)^{\frac12}  \, c_{\lambda}(x,\xi) \, Q(t,x,\xi)^{\frac12}  \right)  \dd x \, \dd \xi \right)^{\frac12},
\]
allowing, by minimizing as $Q(t,\cdot,\cdot)$ runs through $\C(f(t),R(t))$, to conclude to
\begin{equation*}
\left| \perTr(\chi R(t)) - \iint_{\Gamma \times \R^d} \chi (x) f(t,x,\xi) \, \dd x \, \dd \xi \right| \leq \sqrt{\frac{2 \gamma_+}{\gamma_-}} \frac{\mathrm{Lip}(\chi)}{\lambda}  \, E_{\hbar,\lambda} (f(t),R(t)).
\end{equation*}
By Theorem~\ref{theo:stability}, we obtain
\begin{equation*}
\left| \perTr(\chi R(t)) - \iint_{\Gamma \times \R^d} \chi (x) f(t,x,\xi) \, \dd x \, \dd \xi \right| \leq  {\sqrt{\frac{2 \gamma_+}{\gamma_-}}} \, \frac{\mathrm{Lip}(\chi)}{\lambda} \, \exp \left(\eta_{\lambda,V} \, t \right) \, E_{\hbar,\lambda} (f^{\rm in}, \, R^{\rm in}),
\end{equation*}
where we recall the notation $\eta_{\lambda,V} =  {\frac{\gamma_+}{\gamma_-}} \left(\lambda + \frac{\mathrm{Lip}(\nabla V)^2}{\lambda} \right)$. On the other hand, denoting $\Phi_t(x,\xi)$ by $(X(t;x,\xi), \, \Xi(t;x,\xi))$ and recalling that $f^{\rm in}(\cdot,\xi)$ and $\chi$ are $\Ell$-periodic, we apply Lemma~\ref{lem:LiouvilleChangeVar} to get
\begin{align}
    \iint_{\Gamma \times \R^d} \chi (x) f(t,x,\xi) \, \dd x \, \dd \xi &= \iint_{\Gamma \times \R^d} \chi (x) f^{\rm in}(X(-t;x,\xi), \, \Xi(-t;x,\xi)) \, \dd x \, \dd \xi \nonumber\\
    &= \iint_{\Gamma \times \R^d} \chi (X(t;x,\xi)) f^{\rm in}(x, \xi) \, \dd x \, \dd \xi. \label{eq:QuestionChi}
\end{align}
Therefore,
\begin{multline*}
    \int_0^T \perTr (\chi R(t)) \, \dd t \geq \iint_{\Gamma \times \R^d} \left( \int_0^T \chi (X(t;x,\xi)) \, \dd t \right)  f^{\rm in}(x, \xi) \, \dd x \, \dd \xi \\
    -  {\sqrt{\frac{2 \gamma_+}{\gamma_-}}} \,\frac{\mathrm{Lip}(\chi)}{\lambda} \, \left(\int_0^T \exp  \left(\eta_{\lambda,V} \, t \right) \dd t \right) E_{\hbar,\lambda} (f^{\rm in}, \, R^{\rm in}),
\end{multline*}
which, by direct integration, becomes
\begin{multline} \label{eq:intermedproof0}
    \int_0^T \perTr (\chi R(t)) \, \dd t \geq \iint_{\Gamma \times \R^d} \left( \int_0^T \chi (X(t;x,\xi)) \, \dd t \right)  f^{\rm in}(x, \xi) \, \dd x \, \dd \xi \\
    -  {\sqrt{\frac{2 \gamma_+}{\gamma_-}}} \,\frac{\mathrm{Lip}(\chi)}{\lambda} \, \left(\frac{\exp  \left(\eta_{\lambda,V} \, T \right) - 1 }{\eta_{\lambda,V}} \right) \, E_{\hbar,\lambda} (f^{\rm in}, \, R^{\rm in}).
\end{multline}
We now choose, for some $\delta > 0$,
\[
\chi(x) = \left(1 - \frac{\mathrm{dist}(x,\Omega^\Ell)}{\delta} \right)_+,
\]
which is Lipzchitz with $\mathrm{Lip}(\chi) = \delta^{-1}$ and $\Ell$-periodic (as $\Omega^\Ell$ is $\Ell$-periodic), and satisfies
\begin{equation} \label{eqproof:boundsChi}
    \indic_{\Omega^\Ell_\delta} \geq \chi \geq \indic_{\Omega^\Ell} \geq  \indic_{\Omega}.
\end{equation}
Combining~\eqref{eq:intermedproof0} with~\eqref{eqproof:boundsChi} yields
\begin{multline*}
    \int_0^T \underline{\text{Tr}} \left(\indic_{\Omega_\delta^\Ell} \,  R(t) \, \indic_{\Omega_\delta^\Ell}\right) \, \text{d} t \geq \iint_{\Gamma \times \R^d} \left( \int_0^T \indic_{\Omega} (X(t;x,\xi)) \, {\text{d}} t \right)  f^{\text{in}}(x, \xi) \, {\text{d}} x \, {\text{d}} \xi \\
    -  {\sqrt{\frac{2 \gamma_+}{\gamma_-}}} \left(\frac{1/\delta}{\lambda} \right) \left(\frac{\exp  \left(\eta_{\lambda,V} \, T \right) - 1 }{\eta_{\lambda,V}} \right) \, E_{\hbar,\lambda} (f^{\text{in}}, \, R^{\text{in}}).
\end{multline*}
As $K \subset \Gamma \times \R^d$, we have
\[
\iint_{\Gamma \times \R^d} \left( \int_0^T \indic_{\Omega} (X(t;x,\xi)) \, {\text{d}} t \right)  f^{\text{in}}(x, \xi) \, {\text{d}} x \, {\text{d}} \xi \geq \left(\inf_{(x,\xi) \in K} \int_0^T \indic_{\Omega} (X(t;x,\xi)) \, {\text{d}} t  \right)\iint_{K} f^{\text{in}}(x, \xi) \, {\text{d}} x \, {\text{d}} \xi,
\]
and we therefore obtain, using the Liouville observability result (Lemma~\ref{lem:liouvilleobs}),
\begin{multline}\label{eq:intermedproof1}
\int_0^T \underline{\text{Tr}} \left(\indic_{\Omega_\delta^\Ell} \,  R(t) \, \indic_{\Omega_\delta^\Ell}\right) \, {\text{d}} t \geq C_{{\text{GC}}}[T,K,\Omega] \iint_{(x,\xi) \in K}   f^{\text{in}}(x,\xi) \, {\text{d}} x \, {\text{d}} \xi \\
    -{\sqrt{\frac{2 \gamma_+}{\gamma_-}}} \left(\frac{1/\delta}{\lambda} \right) \left( \frac{\exp  \left(\eta_{\lambda,V} \, T \right) - 1}{\eta_{\lambda,V}} \right)  E_{\hbar,\lambda} (f^{\text{in}}, \, R^{\text{in}}).
\end{multline}
Let us denote by $(R^{\rm in}_k)_{k\in \Gamma^*}$ the Bloch transform of $R^{\rm in}$.

\medskip

$\bullet$ \emph{Töplitz case.} Let us assume $R^{\rm in} = T_\Ell [f^{\rm in}]$. Applying Proposition~\ref{prop:boundToplitz} to~\eqref{eq:intermedproof1}, we obtain
\begin{multline*}
    \int_0^T \underline{{\text{Tr}}} \left(\indic_{\Omega_\delta^\Ell} \,  R(t) \, \indic_{\Omega_\delta^\Ell}\right) \, {\text{d}} t \geq C_{{\text{GC}}}[T,K,\Omega] \iint_{(x,\xi) \in K}   f^{\text{in}}(x,\xi) \, \text{d} x \, \text{d} \xi \\
    -{\sqrt{\frac{2 \gamma_+}{\gamma_-}}} \left(\frac{1/\delta}{\lambda} \right) \left( \frac{\exp  \left(\eta_{\lambda,V} \, T \right) - 1}{\eta_{\lambda,V}} \right)  \sqrt{\frac{1+\lambda^2}{2} \,  d \, \hbar}.
\end{multline*}
Taking $\lambda = \mathrm{Lip}(\nabla V)$ yields~\eqref{eq:theoremToplitz_Obs}.

\medskip

$\bullet$ \emph{Pure state case.} Now we suppose the existence of $(u^{\rm in}_k)_{k \in \Gamma^*} \in L^2_{\rm per}(\Gamma)^{\Gamma^*}$ such that $R^{\rm in}_k = \proj{u^{\rm in}_k}$. Let $f^{\rm in} = \husimi[R^{\rm in}]$, the $\Ell$-periodic Husimi transform of $R$ defined in~\eqref{eqdef:husimi1}--\eqref{eqdef:husimi2}. Taking $\lambda = 1$ and using Equation~\eqref{eq:EhbarHusimi} in Proposition~\ref{prop:EhbarHusimi}, Equation~\eqref{eq:intermedproof1} yields in this case
\begin{multline*}
   \int_0^T \underline{{\text{Tr}}} \left(\indic_{\Omega_\delta^\Ell} \,  R(t) \, \indic_{\Omega_\delta^\Ell}\right) \, {\text{d}} t \geq C_{{\text{GC}}}[T,K,\Omega] \iint_{(x,\xi) \in K}   f^{\text{in}}(x,\xi) \, \text{d} x \, \text{d} \xi \\
    -{\sqrt{\frac{2 \gamma_+}{\gamma_-}}} \left(\frac{1}{\delta} \right) \left( \frac{\exp  \left(\eta_{1,V} \, T \right) - 1}{\eta_{1,V}} \right) \sqrt{d \hbar \, \mathbf{c}_{u^{\rm in}}  + 2 \Delta^2_\Gamma (R^{\rm in})},
\end{multline*}
which is the announced~\eqref{eq:theoremPure_Obs}.
\end{proof}

\addtocontents{toc}{\protect\setcounter{tocdepth}{-1}} 
\section*{Acknowledgements}
We ackowledge the financial support of European Research Council (ERC) under the European Union's Horizon 2020 Research and Innovation Programme – Grant Agreement n°101077204 HighLEAP, awarded to VE. We greatly thank François Golse for having pointed out a subtle difficulty we had first missed. We also kindly thank Thierry Paul and Cyril Letrouit for their discussions. TB kindly thanks Diego Fiorletta for their discussion and for providing references.

\bibliographystyle{plain}
\bibliography{biblio}

\addtocontents{toc}{\protect\setcounter{tocdepth}{1}} 

\appendix

\section{Various tools}
\subsection{Tools on the Bloch transform} \label{sec:Bloch}
\noindent Recall the notation $\Hf = L^2(\R^d; \, \mathbb{C})$.
\begin{definition}
We define the \emph{Bloch transform} as the bounded operator
\[
\B : \Hf \longrightarrow L^2_{qp}(\Gamma^*, L^2_{\rm per}(\Gamma)),
\]
where
\[
L^2_{qp}(\Gamma^*, L^2_{\rm per}(\Gamma)) := \left\{ u \in L^2_{loc}(\R^d;L^2_{\rm per}(\Gamma)) \; | \; \forall (k,K) \in \Gamma^* \times \Ell^*, \, u(k+K) = U_K \, u(k) \right\},
\]
and $U_K$ is the unitary operator on $L^2_{\rm per}(\Gamma)$ of multiplication by the function $x \mapsto e^{-i K \cdot x}$, such that
\begin{equation} \label{eqdef:BlochTransform}
\forall \phi \in \C^{\infty}_c(\R^d), \quad \; (\B \phi)(k)(x) = \sum_{\ell \in \Ell} \phi(x+\ell) \, e^{-i k \cdot (x+\ell)}, \qquad \forall \, (k,x) \in \Gamma^* \times \Gamma.
\end{equation}
It satisfies in particular
\begin{align}
    &\forall u \in L^2(\R^d), \quad \|u\|^2_{L^2} = \int_{\Gamma^*} \|(\B u)(k)\|^2_{L^2_{\rm per}(\Gamma)} \, \dd k, \label{eq:BlochNormsEq} \\
    &\forall u \in L^2(\R^d), \quad \; u(x) = \fint_{\Gamma^*} (\B u)(k)(x) \, e^{i k \cdot x} \, \dd k, \qquad \text{for a.e. } x \in \R^d. \label{eq:inverseBlochTfformula}
\end{align}
\end{definition}

\begin{proposition} \label{prop:blochTfunbd} For any operator $R \in \mathcal{S}_{\Ell}$, there exists a unique (almost everywhere in $k$) family of self-adjoint operators $\{R_k\}_{k \in \Gamma^*}$ over $L^2_{\rm per}(\Gamma)$ such that
\begin{equation} \label{eq:defBlochTfOp}
    \forall f \in \mathcal{D}(R), \qquad (R f)_k = R_k f_k.
\end{equation}
\end{proposition}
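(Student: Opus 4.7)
My plan is to reduce the unbounded self-adjoint case to the bounded decomposable case by passing through the resolvent, and then invoke the standard fact that, under the Bloch transform, operators commuting with every lattice translation become decomposable on the direct integral $L^2_{qp}(\Gamma^*, L^2_{\rm per}(\Gamma))$.

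The starting point is the intertwining identity
\[
(\tau_\ell \phi)_k = e^{-i k \cdot \ell}\, \phi_k, \qquad \ell \in \Ell,
\]
which follows directly from~\eqref{eqdef:BlochTransform}. Consequently, if $A \in \mathcal{L}(\Hf)$ commutes with every $\tau_\ell$, then $\mathcal{B} A \mathcal{B}^{-1}$ commutes with multiplication by each $e^{-i k \cdot \ell}$ on the direct integral. By von Neumann's direct-integral decomposition theorem, this forces the existence of a unique (a.e.\ in $k$) measurable bounded-operator-valued family $\{A_k\}_{k\in\Gamma^*}$ such that $(Af)_k = A_k f_k$ for every $f \in \Hf$, with $A^*$ corresponding to the fiberwise adjoints $\{A_k^*\}$.

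To pass to $R$ itself, I would apply this to its resolvent. Since $R$ is self-adjoint and commutes with every $\tau_\ell$, the bounded operator $G := (R-i)^{-1}$ inherits both properties, so the bounded case produces a family $\{G_k\}_{k \in \Gamma^*}$ with $(Gf)_k = G_k f_k$. Taking adjoints, uniqueness yields $G_k^* = ((R+i)^{-1})_k$ for a.e.\ $k$, and the identity $G G^* = (R^2+1)^{-1}$ transported fiberwise gives $G_k G_k^* = (\text{self-adjoint positive})_k$, which in turn forces $G_k$ to be injective with dense range on a conull set. I then define
\[
\mathcal{D}(R_k) := \mathrm{Ran}(G_k), \qquad R_k := i + G_k^{-1},
\]
and check that $R_k$ is self-adjoint on $L^2_{\rm per}(\Gamma)$ via von Neumann's criterion, using the fact that both $G_k$ and $G_k^*$ are everywhere-defined bounded operators. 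The intertwining identity~\eqref{eq:defBlochTfOp} then follows: for $f \in \mathcal{D}(R)$, writing $f = Gg$ with $g \in \Hf$ gives $Rf = g + if$, so $(Rf)_k = g_k + i G_k g_k = R_k G_k g_k = R_k f_k$, while $f_k = G_k g_k \in \mathcal{D}(R_k)$ on a full-measure set of $k$ by Fubini. Uniqueness of the family is inherited from the bounded case, since any other candidate satisfying~\eqref{eq:defBlochTfOp} would produce the same resolvent fibers and hence coincide a.e.

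The step I expect to be the main obstacle is not algebraic but measure-theoretic: tracking the a.e.\ exceptional sets so that injectivity, dense range and the domain $\mathcal{D}(R_k)$ are controlled \emph{pointwise} in $k$ on a conull set, and confirming the measurability of $k \mapsto R_k$ as an unbounded-operator-valued map. Routing everything through the bounded resolvent $G_k$ is precisely what neutralizes this difficulty, since all non-trivial measurability is packaged inside the bounded decomposition $G \leftrightarrow \{G_k\}$ supplied by the von Neumann theorem.
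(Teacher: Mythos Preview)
Your proof is correct and takes a genuinely different route from the paper's. The paper argues directly at the level of the sesquilinear form: for $u,v \in \mathcal{D}(R)$ it computes $\langle v \,|\, \tau_\ell R u\rangle$ and $\langle v \,|\, R\tau_\ell u\rangle$ via the Bloch isometry, observes that these are the Fourier coefficients (indexed by $\ell \in \Ell$) of the $\Gamma^*$-periodic functions $k \mapsto \langle v_k \,|\, (Ru)_k\rangle$ and $k \mapsto \langle (Rv)_k \,|\, u_k\rangle$, and concludes from $[R,\tau_\ell]=0$ that these two functions agree a.e.\ in $k$; the operator $R_k$ is then extracted from the resulting symmetric form. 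Your approach instead reduces everything to the bounded decomposable case through the resolvent $G = (R-i)^{-1}$, invokes von Neumann's direct-integral theorem to obtain $\{G_k\}$, and reconstructs $R_k$ fiberwise as $i + G_k^{-1}$. The paper's argument is shorter and more self-contained, but its final step---going from the identity $\langle v_k \,|\, (Ru)_k\rangle = \langle (Rv)_k \,|\, u_k\rangle$ to an honest self-adjoint operator $R_k$ with a well-defined domain---is left informal. Your resolvent route is the standard textbook mechanism and handles self-adjointness and domains of the fibers cleanly; the price is importing the bounded decomposition theorem and the a.e.\ bookkeeping you correctly flag (injectivity and dense range of $G_k$ on a conull set, which follows from the fiberwise Borel functional calculus applied to the spectral projection $E_{\{0\}}(G^*G)=0$).
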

\begin{proof}
The proof is derived from one of lecture notes of Eric Cancès (which are not accessible online) which is stated for . Let $u$ and $v \in \mathcal{D}(R)$. For any $\ell \in \Ell$, we have
\[
\langle v | \tau_\ell R u \rangle_{L^2} = \langle \tau_{-\ell}  v |R u \rangle_{L^2}  = \fint_{\Gamma^*} \langle e^{i k\cdot \ell} \, v_k | (R u)_k \rangle_{L^2_{\rm per}} \, \dd k = \fint_{\Gamma^*} \langle  \, v_k | (R u)_k \rangle_{L^2_{\rm per}}\, e^{-i k\cdot \ell} \, \dd k,
\]
as well as
\[
\langle v | R\tau_\ell u \rangle_{L^2} = \langle R  v |\tau_\ell u \rangle_{L^2}  = \fint_{\Gamma^*} \langle (R v)_k | e^{-i k\cdot \ell} u_k  \rangle_{L^2_{\rm per}} \, \dd k = \fint_{\Gamma^*} \langle (R v)_k |  u_k  \rangle_{L^2_{\rm per}}\, e^{-i k\cdot \ell} \, \dd k.
\]
Up to normalization, $\langle v | \tau_\ell R u \rangle_{L^2}$ and $\langle v | R\tau_\ell u \rangle_{L^2}$ therefore correspond to Fourier coefficients of the $\Gamma^*$-periodic functions $k \mapsto  \langle  \, v_k | (R u)_k \rangle_{L^2_{\rm per}}$ and $k \mapsto \langle (R v)_k |  u_k  \rangle_{L^2_{\rm per}}$. Since $\tau_\ell$ and $R$ commute, the last functions thus have same Fourier coefficients, hence are equal almost everywhere. As such, for almost every $k \in \Gamma^*$ and $u,v \in \mathcal{D}(R)$,
\[
\langle  \, v_k | (R u)_k \rangle_{L^2_{\rm per}} = \langle (R v)_k |  u_k  \rangle_{L^2_{\rm per}}.
\]
Since above is linear in $u_k$ and $v_k$, depends on $v$ only through $v_k$ (left term) and on $u$ only through $u_k$ (right term), there is a self-adjoint (due to the above equality) operator over $L^2_{\rm per}(\Gamma)$, denoted $R_k$, such that
\[
\langle  \, v_k | (R u)_k \rangle_{L^2_{\rm per}} = \langle (R v)_k |  u_k  \rangle_{L^2_{\rm per}} = \langle v_k | R_k | u_k  \rangle_{L^2_{\rm per}}.
\]
The uniqueness of $R_k$ is ensured by the density of the domain of $R$ in $L^2(\R^d)$.
\end{proof}

\begin{corollary} \textbf{Operator composition compatibility of the Bloch transform} \label{cor:fundamentalPropertyBlochOp}
    For any finite family of decomposable operators $(R_n)_{1 \leq n \leq N} \in (\ScEll)^N$, $N \in \N^*$, such that for any $n$, we have
    \begin{equation} \label{eq:fundamentalPropertyBlochOp}
       \forall k \in \Gamma^*, \qquad  (R_N \, R_{N-1} \, \dots \, R_1)_k = R_{N,k} \, R_{N-1,k} \, \dots \, R_{1,k},
    \end{equation}
    where the notation $A_{\bullet}$ stands for the Bloch transform of $A$ mentioned in Proposition~\ref{prop:blochTfunbd}.
\end{corollary}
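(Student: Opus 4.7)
The plan is to reduce the statement to its $N=2$ instance by a straightforward induction on $N$, and to prove the $N=2$ case by directly iterating the characterization of the Bloch fibers supplied by Proposition~\ref{prop:blochTfunbd}. Recall that for $A \in \ScEll$, the family $\{A_k\}_{k \in \Gamma^*}$ is the unique (almost everywhere) family of self-adjoint operators on $L^2_{\rm per}(\Gamma)$ such that $(Au)_k = A_k u_k$ for all $u \in \mathcal{D}(A)$, with the uniqueness guaranteed by density of $\mathcal{D}(A)$ in $L^2(\R^d)$ and the fact that the Bloch transform $\B$ is an isometry onto $L^2_{qp}(\Gamma^*, L^2_{\rm per}(\Gamma))$.

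For the base case $N=2$, take $R_1, R_2 \in \ScEll$. Since each of $R_1$ and $R_2$ commutes with every translation $\tau_\ell$, so does $R_2 R_1$ on the natural composition domain $\mathcal{D}(R_2 R_1) := \{u \in \mathcal{D}(R_1) \,:\, R_1 u \in \mathcal{D}(R_2)\}$. Then, for any $u$ in this domain, applying the defining relation of Proposition~\ref{prop:blochTfunbd} twice (first to $R_2$ applied to the vector $R_1 u$, then to $R_1$ applied to $u$) gives, for almost every $k \in \Gamma^*$,
\[
\bigl((R_2 R_1) u\bigr)_k = (R_2 (R_1 u))_k = R_{2,k} \, (R_1 u)_k = R_{2,k} \, R_{1,k} \, u_k.
\]
As $u$ ranges over $\mathcal{D}(R_2 R_1)$, the fiberwise vectors $u_k$ span a dense subset of $L^2_{\rm per}(\Gamma)$ for a.e.~$k$, so the identity above determines the fiber of the composition uniquely and yields $(R_2 R_1)_k = R_{2,k} R_{1,k}$ a.e.

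The inductive step from $N-1$ to $N$ is then verbatim: write $R_N \cdots R_1 = R_N \circ (R_{N-1} \cdots R_1)$, apply the $N=2$ case to this pair, and use the induction hypothesis on the Bloch fiber of $R_{N-1} \cdots R_1$ to conclude~\eqref{eq:fundamentalPropertyBlochOp}.

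The only genuine subtlety is bookkeeping on domains, since the operators in $\ScEll$ need not be bounded (think of $\HH$); one has to verify that $\mathcal{D}(R_N \cdots R_1)$ is translation-invariant and has dense Bloch fibers, which both follow from the fact that each $R_n$ commutes with all $\tau_\ell$. No further analytic input beyond Proposition~\ref{prop:blochTfunbd} and the isometry property~\eqref{eq:BlochNormsEq} of $\B$ is required.
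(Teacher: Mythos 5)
Your proposal is correct and follows essentially the same route as the paper: the paper also takes $f$ in the composition domain, iterates the defining fiber relation $(Ru)_k = R_k u_k$ of Proposition~\ref{prop:blochTfunbd} to peel off the factors one by one, and concludes by uniqueness of the Bloch decomposition — your induction on $N$ is just a reorganization of that single chain of equalities.
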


\begin{proof}
Let $f \in \mathcal{D}(R_N  \, \dots \, R_1)$. Note in particular that for any $n \in \llbracket 1, N-1 \rrbracket$ we have $R_n \, \dots \, R_1 f \in \mathcal{D}(R_{n+1})$. From Equation~\eqref{eq:defBlochTfOp} in Proposition~\ref{prop:blochTfunbd}, we have for any $k \in \Gamma^*$ that
\[
(R_N  \dots  R_1)_k f_k = (R_N  \dots  R_1 f)_k = R_{N,k} (R_{N-1} \dots R_1 f)_k = \dots = R_{N,k}  \dots R_{1,k} f_k,  
\]
proving~\eqref{eq:fundamentalPropertyBlochOp} by uniqueness of the Bloch decomposition (see Proposition~\ref{prop:blochTfunbd}).
\end{proof}

\subsection{Some lemmas} \label{app:lemmas}

\begin{lemma}[\textbf{Periodic-potential Liouville flow change of variable formula}]\label{lem:LiouvilleChangeVar}
Let $V \in \mathcal{C}^{1,1}(\R^d) \cap H^2_{\rm per}(\Gamma)$, and consider, as in the text, the Liouville flow $\Phi$ associated with the ODE system~\eqref{eq:trajectoryInit}. Then for any $t \in \R$ and $\Ell$-periodic-in-the-$x$-variable function $g \in L^1_{x \text{-}\rm per}(\Gamma \times \R^d)$, we have
\begin{equation}
    \label{eq:LiouvilleChangeVar}
    \iint_{\Gamma \times \R^d} g(x,\xi) \, \dd x \, \dd \xi = \iint_{\Gamma \times \R^d} g \left(\Phi_t(x,\xi) \right) \, \dd x \, \dd \xi.
\end{equation}
\end{lemma}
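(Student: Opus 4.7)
The plan is to combine Liouville's theorem on the measure preservation of Hamiltonian flows with the pseudo-periodicity~\eqref{eq:pseudoPeriodicityPhi} of $\Phi_t$ and the $\Ell$-periodicity of $g$ in $x$, in order to transfer the integral from $\Phi_t(\Gamma\times\R^d)$ back to $\Gamma\times\R^d$.

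I would first reduce to the case $g\ge 0$: splitting $g=g_+-g_-$, it is enough to prove the identity for nonnegative integrands (the two sides being a priori in $[0,\infty]$), after which integrability of the right-hand side for general $g\in L^1$ follows by applying the identity to $|g|$. Since $V\in\mathcal{C}^{1,1}(\R^d)$ is $\Ell$-periodic and hence bounded, the Hamiltonian vector field $(\xi,-\nabla V(x))$ is locally Lipschitz, and conservation of energy $H(\Phi_t(x,\xi))=H(x,\xi)$ controls $|\Xi(t;x,\xi)|$ uniformly in time, so $\Phi_t$ extends to a global $C^1$ diffeomorphism of $\R^d\times\R^d$. This vector field being divergence-free, Liouville's theorem gives $|\det D\Phi_t|\equiv 1$, and the classical change-of-variables formula yields
\begin{equation*}
\iint_{\Gamma\times\R^d} g(\Phi_t(x,\xi))\,\dd x\,\dd \xi \;=\; \iint_{\Phi_t(\Gamma\times\R^d)} g(y,\eta)\,\dd y\,\dd \eta.
\end{equation*}

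I would then partition $\Phi_t(\Gamma\times\R^d)$ according to the tiling $\R^d=\bigsqcup_{\ell'\in\Ell}(\Gamma+\ell')$ in the $y$-variable, setting
\begin{equation*}
B_{\ell'}\;:=\;\Phi_t(\Gamma\times\R^d)\cap\bigl((\Gamma+\ell')\times\R^d\bigr).
\end{equation*}
Using the pseudo-periodicity in the form $\Phi_t((\Gamma+\ell)\times\R^d)=\Phi_t(\Gamma\times\R^d)+(\ell,0)$, together with the bijectivity of $\Phi_t$ and the disjointness of $\{(\Gamma+\ell)\times\R^d\}_{\ell\in\Ell}$, I obtain that the translated family $\{B_{\ell'}-(\ell',0)\}_{\ell'\in\Ell}$ in turn partitions $\Gamma\times\R^d$ (both decompositions up to Lebesgue-null boundary sets). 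Now invoking the $\Ell$-periodicity of $g$ in its first variable, the translation $y\mapsto y-\ell'$ gives
\begin{equation*}
\iint_{B_{\ell'}} g(y,\eta)\,\dd y\,\dd\eta \;=\; \iint_{B_{\ell'}-(\ell',0)} g(y',\eta)\,\dd y'\,\dd\eta,
\end{equation*}
and summing over $\ell'\in\Ell$ recovers $\iint_{\Gamma\times\R^d} g(y',\eta)\,\dd y'\,\dd\eta$, which is the desired identity.

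The main technical point is to keep the bookkeeping of null sets honest, in particular checking that $\Phi_t(\partial\Gamma\times\R^d)$ remains a set of measure zero (so that the overlaps in the two partitions above are harmless); this follows from $\partial\Gamma$ being Lebesgue-null, $|\det D\Phi_t|=1$ and Fubini. Apart from that, the argument is purely measure-theoretic and uses no more regularity than the well-posedness of the flow $\Phi_t$.
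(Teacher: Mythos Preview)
Your argument is correct and follows essentially the same route as the paper: Liouville's theorem for measure preservation, the pseudo-periodicity $\Phi_t(x+\ell,\xi)=\Phi_t(x,\xi)+(\ell,0)$, and the $\Ell$-periodicity of $g$ combined with the tiling $\R^d=\bigsqcup_{\ell}(\Gamma+\ell)$. The only cosmetic difference is that the paper first extends the integral to $\R^d\times\R^d$ via the indicator $\indic_{x\in\Gamma}$, changes variables globally, and then decomposes the \emph{domain} over $\ell\in\Ell$, whereas you change variables on $\Gamma\times\R^d$ and decompose the \emph{image} $\Phi_t(\Gamma\times\R^d)$ via the sets $B_{\ell'}$; these are dual versions of the same periodization bookkeeping.
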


\begin{proof}
    We write
    \[
    \iint_{\Gamma \times \R^d} g(x,\xi) \, \dd x \, \dd \xi = \iint_{\R^d \times \R^d} \, \indic_{x \in \Gamma} \; g(x,\xi) \, \dd x \, \dd \xi.
    \]
    By Liouville's invariance theorem, the flow $\Phi_t$ is a $\mathcal{C}^1$-diffeomorphism of $\R^d \times \R^d$ that leaves $\dd x \, \dd \xi$ invariant, hence, denoting $(X_t(x,\xi),\Xi_t(x,\xi)) = \Phi_t(x,\xi)$, we obtain
    \[
    \iint_{\R^d \times \R^d} \, \indic_{x \in \Gamma} \; g(x,\xi) \, \dd x \, \dd \xi = \iint_{\R^d \times \R^d} \, \indic_{X_t(x,\xi) \in \Gamma} \; g(X_t(x,\xi),\Xi_t(x,\xi)) \, \dd x \, \dd \xi.
    \]
    Decomposing the integral in $x$ on the lattice $\Ell$, the above may be recast as
    \[
    \sum_{\ell \in \Ell} \iint_{\Gamma + \ell \times \R^d} \, \indic_{X_t(x,\xi) \in \Gamma} \; g(X_t(x,\xi),\Xi_t(x,\xi)) \, \dd x \, \dd \xi,
    \]
    that is
\[
\sum_{\ell \in \Ell} \iint_{\Gamma\times \R^d} \, \indic_{X_t(x+\ell,\xi) \in \Gamma} \; g(X_t(x+\ell,\xi),\Xi_t(x,\xi)) \, \dd x \, \dd \xi.
\]
Now recall~\eqref{eq:pseudoPeriodicityPhi}, stating that the $\Ell$-periodicity of $V$ implies that $X_t(x+\ell,\xi) = X_t(x,\xi) + \ell$. The above becomes, using the $\Ell$-periodicity of $g$ in its first variable,
\[
\sum_{\ell \in \Ell} \iint_{\Gamma\times \R^d} \, \indic_{X_t(x,\xi) \in \Gamma - \ell} \; g(X_t(x,\xi),\Xi_t(x,\xi)) \, \dd x \, \dd \xi,
\]
which rewrites
\[
 \iint_{\Gamma\times \R^d} \left(\sum_{\ell \in \Ell} \indic_{X_t(x,\xi) \in \Gamma - \ell} \right) g(X_t(x,\xi),\Xi_t(x,\xi)) \, \dd x \, \dd \xi.
\]
Since $\{\Gamma - \ell\}_{\ell \in \Ell}$ is a partition of $\R^d$, it holds for all $(x,\xi) \in \Gamma \times \R^d$ that
\[
\sum_{\ell \in \Ell} \indic_{X_t(x,\xi) \in \Gamma - \ell} = 1,
\]
allowing to conclude.
\end{proof}

\noindent The following lemma provides the Bloch decomposition of the flow operator $U(t)$ and the transportation cost $c_\lambda (x,\xi)$.
\begin{lemma} [\textbf{Bloch transform of the evolution and cost operators}]
\label{lem:BlochTransFormulas}
For almost any $(t,x,\xi) \in \R \times \R^d \times \R^d$ and $k \in \Gamma^*$, we have
\begin{align}
    (U(t))_k &= U_k(t), \label{eq:U_t_k} \\
\left( c_\lambda (x, \xi) \right)_k &= \hat{c}_\lambda (x,\xi - \hbar k), \label{eq:c_lba_k}
\end{align}
where $(R)_k$ stands for the $k^{th}$ component of the Bloch transform of an operator $R$, as defined in Appendix~\ref{sec:Bloch}. We recall that the flow operators $U(t)$ (on $L^2(\R^d)$) and $U_k(t)$ (on $L^2_{\rm per}(\Gamma)$) are associated to the von Neumann equations related to the quantum Hamiltonians $\HH$ and $\HH_k$ defined in~\eqref{eq:quantumHamiltonian}--\eqref{eq:quantumHamiltonian_k} and the costs $c_\lambda$ and $\hat{c}_\lambda$ are defined in~\eqref{eqdef:transpcost}.
\end{lemma}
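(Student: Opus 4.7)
The plan is to deduce both identities from a single fundamental commutation rule between the Bloch transform and the momentum operator, and then to combine it with the uniqueness of the Schrödinger evolution and with the fact (already implicit in the text) that multiplication by an $\Ell$-periodic function commutes with the Bloch transform.

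First I would establish the key identity: for $\phi \in \mathcal{C}_c^\infty(\R^d)$ and $k \in \Gamma^*$,
\[
\mathcal{B}(-i\hbar \nabla \phi)(k) = (-i\hbar \nabla_y + \hbar k)\, \mathcal{B}(\phi)(k).
\]
This follows by differentiating termwise the explicit sum~\eqref{eqdef:BlochTransform}: writing $\phi_k(y)=\sum_{\ell \in \Ell} \phi(y+\ell)e^{-ik\cdot(y+\ell)}$, the chain rule transfers the factor $e^{-ik\cdot(y+\ell)}$ through the gradient and produces the extra $+\hbar k$ term. By density (using that the Bloch transform is an isometry onto $L^2_{qp}(\Gamma^*,L^2_{\rm per}(\Gamma))$ and that $-i\hbar\nabla$ is closed on $H^1(\R^d)$), this identity extends to the natural domains.

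Next, for formula~\eqref{eq:U_t_k}, I would combine this rule applied twice with the fact that multiplication by the $\Ell$-periodic function $V$ commutes with the Bloch transform to obtain, for any $\phi$ in the domain of $\HH$,
\[
\mathcal{B}(\HH \phi)(k) = \left[\tfrac{1}{2}(-i\hbar \nabla_y + \hbar k)^2 + V(y)\right] \phi_k = \HH_k\, \phi_k,
\]
so that by Proposition~\ref{prop:blochTfunbd} the $k$-th Bloch component of $\HH$ is $\HH_k$. Given $\psi^{\rm in} \in L^2(\R^d)$ with Bloch components $\{\psi^{\rm in}_k\}_{k}$, the function $t \mapsto \psi(t) := U(t)\psi^{\rm in}$ solves $i\hbar \partial_t \psi = \HH \psi$; applying the Bloch transform shows that $t \mapsto (U(t))_k \psi^{\rm in}_k = (\psi(t))_k$ solves $i\hbar \partial_t \varphi = \HH_k \varphi$ on $L^2_{\rm per}(\Gamma)$ with initial datum $\psi^{\rm in}_k$. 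By uniqueness of the unitary evolution generated by the self-adjoint operator $\HH_k$, this solution must coincide with $U_k(t) \psi^{\rm in}_k$, so that $(U(t))_k \psi^{\rm in}_k = U_k(t) \psi^{\rm in}_k$ for a.e.\ $k$ and every $\psi^{\rm in}$, yielding~\eqref{eq:U_t_k}.

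For formula~\eqref{eq:c_lba_k}, I would split $c_\lambda(x,\xi)$ into its potential part $\lambda^2 \Theta(|P_\Gamma(x-y)|^2)$ and its kinetic part $(\xi+i\hbar\nabla_y)^2$. The potential part is multiplication by a function of $y$ that is $\Ell$-periodic (the construction of $\Theta \circ |P_\Gamma|^2$ is precisely designed for that, see Definition~\ref{def:class-quandcost}), so it commutes with the Bloch transform and its $k$-th component is the same multiplication operator acting on $L^2_{\rm per}(\Gamma)$. For the kinetic part, iterating the key identity gives that the Bloch transform of $(\xi+i\hbar\nabla_y)^2$ at level $k$ is $((\xi-\hbar k) + i\hbar\nabla_y)^2$. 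Summing the two contributions reconstructs exactly $\hat{c}_\lambda(x,\xi-\hbar k)$, proving~\eqref{eq:c_lba_k}.

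The only real technical difficulty is the bookkeeping of domains for the unbounded operators $\HH$, $\HH_k$, and the momentum-like operators appearing in $c_\lambda$; this is however routine given that $V \in H^2_{\rm per}(\Gamma) \cap \mathcal{C}^{1,1}(\R^d)$ and $\Theta \circ |P_\Gamma|^2 \in H^2_{\rm per}(\Gamma)$, so that both sides of each identity are densely defined on the natural Sobolev-type spaces and agree on a common core (for instance $\mathcal{C}_c^\infty$ on the real-space side and $\mathcal{C}^\infty_{\rm per}(\Gamma)$ on the Bloch side).
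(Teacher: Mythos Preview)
Your proposal is correct. For~\eqref{eq:c_lba_k} you do essentially what the paper does: compute directly on test functions, using that multiplication by an $\Ell$-periodic function commutes with the Bloch transform and that $-i\hbar\nabla$ intertwines with $-i\hbar\nabla_y+\hbar k$ at the Bloch level.

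For~\eqref{eq:U_t_k} your route differs from the paper's. The paper appeals to an abstract functional-calculus result for direct integrals (Gilfeather~\cite{gilfeather1973functional}, via Reed--Simon): if $A$ is decomposable and $f$ is analytic, then $f(A)$ is decomposable with fibers $f(A_k)$; applying this to $f(h)=\exp(ith/\hbar)$ and $A=\HH$ yields the claim in one stroke. You instead first establish $(\HH)_k=\HH_k$ and then invoke uniqueness of the unitary propagator generated by the self-adjoint $\HH_k$. Your argument is more self-contained and avoids citing external machinery; the paper's is shorter once one accepts the direct-integral functional calculus as a black box. One small point in your version: to make the uniqueness argument rigorous you should first take $\psi^{\rm in}$ in the domain of $\HH$ (so that $t\mapsto U(t)\psi^{\rm in}$ is a strong solution and the Bloch transform can be differentiated in $t$), and then extend the operator identity $(U(t))_k=U_k(t)$ to all of $L^2$ by boundedness and density; your closing remark on domains covers this, but it is worth being explicit.
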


\begin{proof} 
$\bullet$ Equation~\eqref{eq:U_t_k} corresponds to a time-dependent version of the Bloch Theorem. It is given in~\cite{gilfeather1973functional} that if an operator $A$ is decomposable as $\int^\oplus_\Lambda A(\lambda) \dd \mu(\lambda)$ on a
direct integral of Hilbert spaces $\mathfrak{H} = \int^\oplus_\Lambda \mathfrak{H}_\lambda \dd \mu(\lambda)$ and $f$ is an analytic function, the operator $f(A)$ is decomposable and $f(A(\lambda)) = f(A)(\lambda)$ almost everywhere. Since the Bloch decomposition can be interpreted in this formalism, substituting $A$ by $\HH$ and $\Lambda$ by $\Gamma^*$ (see Reed and Simon~\cite{reed1980methods}) and since for any $t \in \R$, the application $h \in \mathbb{R} \mapsto \exp(i t \, h / \hbar )$ is analytic, we deduce that for all $t \in \R$, we have for almost any $k \in \Gamma^*$ that
\[
\left(\exp(i t \, \HH / \hbar ) \right)_k = \exp(i t \, \HH_k / \hbar ),
\]
that is~\eqref{eq:U_t_k}.

    \medskip

    $\bullet$ Let us now prove~\eqref{eq:c_lba_k}. Consider $\phi \in \mathcal{C}^{\infty}_c(\R^d)$. On the one hand, for any $k \in \Gamma^*$, we have
    \[
    \phi_k(y) = \sum_{\ell \in \Ell} \phi(y+\ell) \, e^{-i k \cdot (y+\ell)}, \qquad \forall \, y \in \Gamma,
    \]
    and on the other hand,
    \[
    (c_\lambda (x,\xi) \phi)_k(y) = \left(\lambda^2 \Theta(|P_\Gamma(x-y)|^2) + (\xi - \hbar k + i\hbar \nabla y)^2 \right)  \phi_k(y) 
    \]
    as
    \[
   \left( c_\lambda (x,\xi) \phi \right) (y) = \left(\lambda^2 \Theta(|P_\Gamma(x-y)|^2) + (\xi + i \hbar \nabla_y)^2 \right) \phi (y).
    \]
    Indeed, the Bloch transform commutes with the multiplication with an $\Ell$-periodic function.
\end{proof}

\begin{lemma} [\textbf{Commutator formulas}] \label{lem:commutators} Consider $V \in H^2_{\rm per}(\Gamma)$. We have the identities, seeing all following operators as acting on $L^2_{\rm per}(\Gamma)$ functions of the variable $y$,
\begin{align}
\frac{i}{\hbar} \left[V(y), \, (\xi + i \hbar \nabla_y)^2 \right] &= ( \xi + i \hbar \nabla_y) \cdot \nabla V(y) + \nabla V(y) \cdot \left( \xi + i \hbar \nabla_y \right), \label{eqlem:identity1} \\
\frac{i}{\hbar} \left[V(y), \,\lambda^2 \Theta(|P_\Gamma(x-y)|^2) \right] &= 0, \label{eqlem:identity2}  \\
\frac{i}{\hbar} \left[\frac{1}{2}(-i \hbar\nabla_y + \hbar k)^2, \, (\xi + i \hbar \nabla_y)^2 \right] &= 0, \label{eqlem:identity3} \\
\frac{i}{\hbar} \left[\frac{1}{2}(-i \hbar\nabla_y + \hbar k)^2, \, \lambda^2  \Theta(|P_\Gamma(x-y)|^2) \right] &=   \lambda^2 (-i\hbar \nabla_y + \hbar k) \cdot \left[ P_{\Gamma}(y-x) \; \Theta'(|P_\Gamma(x-y)|^2 ) \right] \\
&+ \lambda^2 P_{\Gamma}(y-x) \; \Theta'(|P_\Gamma(x-y)|^2 ) \cdot (-i\hbar \nabla_y + \hbar k).  \label{eqlem:identity4} 
\end{align}
\end{lemma}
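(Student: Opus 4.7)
Two of the four identities follow from structural observations and require no computation. Identity~\eqref{eqlem:identity2} is trivial since both $V(y)$ and $\lambda^2 \Theta(|P_\Gamma(x-y)|^2)$ act as multiplication by a function of $y$, and multiplication operators commute. Identity~\eqref{eqlem:identity3} is also immediate: treating $\xi, k \in \R^d$ as fixed parameters, the operators $\frac12(-i\hbar\nabla_y + \hbar k)^2$ and $(\xi + i\hbar\nabla_y)^2$ are constant-coefficient polynomials in the commuting family $(\partial_{y_1},\dots,\partial_{y_d})$, hence they commute.

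For~\eqref{eqlem:identity1} and~\eqref{eqlem:identity4} I would factor through the same general computation: given a constant vector $a \in \R^d$, a sign $\epsilon \in \{\pm 1\}$, and a smooth multiplication operator $W(y)$, set $D = a + \epsilon\, i\hbar\nabla_y$. A one-line check componentwise gives $[D_m, W] = \epsilon\, i\hbar\, \partial_{y_m} W$ as a multiplication operator, and then
\begin{equation}
[\tfrac12 D\cdot D,\, W] \;=\; \tfrac12\sum_m \bigl(D_m[D_m,W] + [D_m,W]\,D_m\bigr) \;=\; \tfrac{\epsilon\, i\hbar}{2}\bigl(D\cdot \nabla W + \nabla W \cdot D\bigr).
\end{equation}
Multiplying by $i/\hbar$ turns this into $-\tfrac{\epsilon}{2}(D\cdot\nabla W + \nabla W\cdot D)$. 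Applied with $\epsilon=+1$, $a=\xi$, $W=V$, the product $D\cdot D = (\xi+i\hbar\nabla_y)^2$ is not divided by $2$ in~\eqref{eqlem:identity1}, but the prefactor $-[\cdot,\cdot]= -\tfrac{i\hbar}{2}\cdot 2 (\cdots)$ adjusts, and using $[V,D^2] = -[D^2,V]$ yields exactly the right-hand side of~\eqref{eqlem:identity1}. Applied with $\epsilon=-1$, $a=\hbar k$, $W = \lambda^2\Theta(|P_\Gamma(x-y)|^2)$, one obtains~\eqref{eqlem:identity4} provided
\[
\nabla_y\bigl[\Theta(|P_\Gamma(x-y)|^2)\bigr] \;=\; 2\, P_\Gamma(y-x)\, \Theta'\bigl(|P_\Gamma(x-y)|^2\bigr).
\]

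The substantive step is thus the verification of this gradient identity in the appropriate function-space sense, given that $z\mapsto P_\Gamma(z)$ is only defined on $\R^d\setminus(\partial\Gamma+\Ell)$ and jumps by an element of $\Ell$ across this singular set. Away from the singular set, $P_\Gamma(x-y) = x-y-\ell$ for $\ell\in\Ell$ locally constant, so by the chain rule $\nabla_y|P_\Gamma(x-y)|^2 = -2P_\Gamma(x-y) = 2P_\Gamma(y-x)$ (invoking~\eqref{eq:PgammaMinusz}), and the stated formula holds pointwise a.e. The point is then to promote this pointwise identity to a weak/distributional derivative on $L^2_{\rm per}(\Gamma)$, i.e. to show no singular boundary term is produced by the jumps of $P_\Gamma$ across $\partial\Gamma + \Ell$. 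This is precisely the reason for introducing the regularization $\Theta$ in~\eqref{eqdef:Theta}: because $\Theta'(r) = (1-r/\gamma_-)_+$ is supported away from $r$ large, the product $P_\Gamma(y-x)\,\Theta'(|P_\Gamma(x-y)|^2)$ vanishes in a neighborhood of the singular set, so no jump contribution arises when testing against a smooth $\Ell$-periodic function; equivalently, $y\mapsto\Theta(|P_\Gamma(x-y)|^2) \in H^2_{\rm per}(\Gamma)$ and its classical gradient is its weak one.

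The main obstacle is precisely this last point: the jump of $P_\Gamma$ across $\partial\Gamma+\Ell$ requires a careful cutoff argument to justify discarding the boundary contribution in the integration by parts underlying $[\tilde D, W]=-i\hbar\nabla_y W$. Once this is secured, the four identities assemble as described above.
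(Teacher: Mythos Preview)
Your plan coincides with the paper's proof. Identities~\eqref{eqlem:identity2} and~\eqref{eqlem:identity3} are dismissed there as immediate because the operators commute; identity~\eqref{eqlem:identity4} is then derived from~\eqref{eqlem:identity1} by the substitution $\xi\mapsto-\hbar k$, $V\mapsto-\Theta(|P_\Gamma(x-y)|^2)$; and~\eqref{eqlem:identity1} itself is established by expanding $(\xi+i\hbar\nabla_y)^2$ and computing directly. Your packaging via the algebraic identity $[D^2,W]=D[D,W]+[D,W]D$ is a cleaner way to say the same thing.

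Where your proposal has a gap is in the regularity argument for $W=\Theta(|P_\Gamma(x-y)|^2)$. You claim that the product $P_\Gamma(y-x)\,\Theta'(|P_\Gamma(x-y)|^2)$ vanishes near the singular set because $\Theta'(r)=(1-r/\gamma_-)_+$ is supported in $[0,\gamma_-]$. But on the singular set one only has $|P_\Gamma(x-y)|\geq\gamma_-$, hence $|P_\Gamma(x-y)|^2\geq\gamma_-^2$, which is \emph{smaller} than $\gamma_-$ whenever $\gamma_-<1$ (for $\Ell=\mathbb Z^d$ one has $\gamma_-=\tfrac12$). In that regime $\Theta'$ does not vanish on the whole jump set, the product retains a discontinuity across $\partial\Gamma+\Ell$, and your argument for $W\in H^2_{\rm per}(\Gamma)$ breaks down. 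Note also that the weak first-derivative formula you want holds automatically from $W$ being Lipschitz and needs no vanishing of $\Theta'$; the genuine issue is the second-order regularity required to interpret both sides of~\eqref{eqlem:identity4} as operators. For comparison, the paper's own proof simply asserts that $y\mapsto\Theta(|P_\Gamma(x-y)|^2)\in H^2_{\rm per}(\Gamma)$ and applies the substitution, without further justification of this point.
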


\begin{proof}
    Identities~\eqref{eqlem:identity2}--\eqref{eqlem:identity3} are straightforward as the involved operators commute. Moreover, Identity~\eqref{eqlem:identity4} is just a consequence of~\eqref{eqlem:identity1}, replacing $\xi$ by $-\hbar k$ and $V$ by $y \mapsto -\Theta(|P_\Gamma(x-y)|^2)$, as the latter indeed belongs to $H^2_{\rm per}(\Gamma)$. Therefore, we only need to prove~\eqref{eqlem:identity1}. 

    \medskip
    
    \noindent We have
    \[
    (\xi + i \hbar \nabla_y)^2 = |\xi|^2 + 2 \xi \cdot (i \hbar \nabla_y) + ( i \hbar \nabla_y)^2,
    \]
    so that
    \begin{multline*}
    (\xi + i \hbar \nabla_y)^2 V(y) = |\xi|^2 V(y) + 2 \xi \cdot \left(i \hbar \nabla V (y) + V(y) (i \hbar \nabla_y) \right) \\+ \left( -\hbar^2 \Delta_y V(y) + 2i\hbar \nabla V(y) (i \hbar \nabla_y) + V(y) (i \hbar \nabla_y)^2 \right).
    \end{multline*}
    On the other hand,
    \begin{equation*}
   V(y) (\xi + i \hbar \nabla_y)^2  = V(y)\left(  |\xi|^2 + 2 \xi \cdot ( i \hbar \nabla_y) + (i \hbar \nabla_y)^2 \right),
    \end{equation*}
    so that
    \begin{equation}
\frac{i}{\hbar}\left[V(y), \, (\xi + i \hbar \nabla_y)^2 \right]  =  2 \xi \cdot \nabla V(y)  + i \hbar \Delta_y V(y) + 2 \nabla V(y) (i\hbar \nabla_y).
    \end{equation}
We now compute
\[
( \xi + i \hbar \nabla_y) \cdot \nabla V(y) = \xi  \cdot \nabla V(y) + i \hbar \Delta_y V(y) + \nabla V(y) (i\hbar \nabla_y),
\]
showing that
\[
\frac{i}{\hbar}\left[V(y), \, (\xi + i \hbar \nabla_y)^2 \right]  = ( \xi + i \hbar \nabla_y) \cdot \nabla V(y) + \nabla V(y) \cdot \left( \xi + i \hbar \nabla_y \right).
\]
\end{proof}

\end{document}